\newtheorem{thrm}{Theorem}[section]
\newtheorem{lem}[thrm]{Lemma}
\newtheorem{prop}[thrm]{Proposition}
\newtheorem{cor}[thrm]{Corollary}
\theoremstyle{definition}
\newtheorem{remark}[thrm]{Remark}
\numberwithin{equation}{section}
\DeclareMathOperator{\conv}{conv}
\DeclareMathOperator{\id}{id}
\newcommand{\N}{\ensuremath{{\mathbb N}}}
\newcommand{\R}{\ensuremath{{\mathbb R}}}
\newcommand{\E}{\ensuremath{{\mathbb E}}}
\newcommand{\Pro}{\ensuremath{{\mathbb P}}}
\newcommand{\abs}[1]{\left\lvert#1 \right\rvert}
\newcommand{\vol}{\mathrm{vol}}
\newcommand\rad{{\rm rad}}
\newcommand\Lip{{\rm Lip}}
\newcommand{\BB}{\mathbb{B}}
\newcommand{\DD}{\mathbb{D}}
\newcommand{\SSS}{\mathbb{S}}
\newcommand{\calP}{\mathcal{P}}
\newcommand{\calC}{\mathcal{C}}
\newcommand{\eps}{\varepsilon}
\newcommand{\bM}{\ensuremath{{\mathbf m}}}
\newcommand{\dint}{\,{\rm d}}
\author{ 
  Aicke Hinrichs
  \and 
  Joscha Prochno 
  \and 
  Mario Ullrich
}
\address[Aicke Hinrichs]{Institut f\"ur Analysis\\
Johannes Kepler Universit\"at Linz\\
Altenbergerstrasse 69\\
4040 Linz\\
Austria}
\email{aicke.hinrichs@jku.at}
\address[Mario Ullrich]{Institut f\"ur Analysis\\
Johannes Kepler Universit\"at Linz\\
Altenbergerstrasse 69\\
4040 Linz\\
Austria}
\email{mario.ullrich@jku.at}
\address[Joscha Prochno]{School of Mathematics \& Physical Sciences\\
University of Hull\\
Cottingham Road\\
Hull HU6 7RX \\
United Kingdom}
\email{j.prochno@hull.ac.uk}
\thanks{The authors gratefully acknowledge the support of the Erwin Schr\"odinger International Institute for Mathematics and Physics (ESI) in Vienna under the thematic programme ``Tractability of High Dimensional Problems and Discrepancy'', where part of this work was completed. A. Hinrichs is supported by the Austrian Science Fund (FWF) Project F5509-N26, which is a part of the Special Research Program ``Quasi-Monte Carlo Methods: Theory and Applications''. J. Prochno is supported by a Visiting Professorship of the Ruhr University Bochum.} 
\keywords{}
\subjclass{}
\begin{document}

\title[Curse of dimensionality for numerical integration]{The curse of dimensionality for numerical integration on general domains}

\begin{abstract}
We prove the curse of dimensionality in the worst case setting for multivariate 
numerical integration for various classes of smooth functions. 
We prove the results when the domains are isotropic convex bodies with small 
diameter satisfying a universal $\psi_2$-estimate. 
In particular, we obtain the result for the important class of volume-normalized $\ell_p^d$-balls in the complete regime $2\leq p \leq \infty$. 
This extends a result in a work of A. Hinrichs, E. Novak, M. Ullrich and H. Wo\'zniakowski \cite{HNUW2} to the whole range $2\leq p \leq \infty$,
and additionally provides a unified approach. 
The key ingredient in the proof is a deep result from the theory of Asymptotic 
Geometric Analysis, the thin-shell volume concentration estimate due to 
O. Gu\'edon and E. Milman. The connection of Asymptotic Geometric Analysis and Information-based Complexity revealed in this work seems promising and is of independent interest. 
\end{abstract}
\maketitle


\section{Introduction and Main results} \label{intro and main}

\subsection{Introduction} 
In the last decade, understanding complex systems that depend on a huge amount of 
parameters and, more specifically, the study of high-dimensional geometric structures has become increasingly important. It has become apparent by now that the presence of high dimensions forces a certain regularity in the geometry of the space while, on the other hand, it unfolds various rather unexpected phenomena. Two independent and rather young mathematical disciplines that center around questions of this type from different perspectives are Information-based Complexity and Asymptotic Geometric Analysis.  This work will bring together both areas for the first time to tackle one of the most frequent questions arising in and related to high-dimensional frameworks, namely the one for the curse of dimensionality. 

A prominent example in this respect is multivariate numerical integration, which has received a lot of attention in previous years and is the central topic of this work. One is interested in approximating, up to some given error $\varepsilon>0$, the value of a multivariate integral over a volume-normalized domain $K_d\subseteq \R^d$ ($d$ considered large) of a function $f:K_d\to\R$ that belongs to some class $\mathscr F_d$ of smooth functions. In this manuscript, we shall consider linear (deterministic) algorithms that use only function values to approximate such an integral. If the minimal number of function evaluations needed for this task in the 
worst case setting increases exponentially with the space dimension $d$, 
then we say that the function class $\mathscr F_d$ suffers from the curse of 
dimensionality. For any unexplained notion or notation, we refer the reader to Section \ref{Sec:prelim and notation}.

In \cite{HNUW2}, improving and extending previous results obtained in 
\cite{HNUW1}, Hinrichs, Novak, Ullrich and Wo\'zniakowski proved the curse of dimensionality in the worst case 
setting for numerical integration for a number of classes of smooth $d$-variate 
functions. They considered different bounds on the Lipschitz constants for the 
directional or partial derivatives of $f\in \mathscr C^k(K_d)$ 
($k\in\N\cup\{+\infty\}$), where $\mathscr C^k(K_d)$ denotes the space of $k$-times continuously differentiable functions on a domain $K_d\subseteq \R^d$ with $\vol_d(K_d)=1$. The assumption that the domain of integration has volume one guarantees that the integration problem is properly normalized. To be more specific, for $d\in\N$ and a volume-normalized domain $K_d\subseteq\R^d$, we define the class $\mathscr C^1_d(A_d,B_d,K_d)$ of smooth functions depending on two Lipschitz parameters, $A_d>0$ and $B_d>0$, as follows:
\begin{align*}
& \mathscr C^1_d(A_d,B_d,K_d) \,:=\, \cr
 & \qquad \Big\{f\in \mathscr C^1(K_d)\,\colon\,
		\|f\|_\infty\le1,\,\Lip(f)\le A_{d} \text{ and }  \Lip(D^\theta f)\le B_{d} \text{ for all } \theta\in\SSS^{d-1} \Big\},
\end{align*}
where $D^\theta$ denotes the directional derivative in direction $\theta\in\SSS^{d-1}$ 
and
\[
\Lip(g) \,:=\, \sup_{x,y\in K_d}\frac{|g(x)-g(y)|}{\|x-y\|_2}.
\]
Classes of this type and variants thereof are classical objects in 
numerical analysis.
Historically, the central question was the rate of error convergence for fixed dimension $d$. 
Only recently, motivated by modern applications, the emphasis was shifted to 
the study of the dependence of the error on the dimension. 
Specifically, in \cite{HNUW1, HNUW2} the authors discussed necessary and 
sufficient conditions 
on the parameters $A_d$ and $B_d$ as well as on the domain $K_d$ such that the curse of dimensionality holds for the class $\mathscr C^1_d(A_d,B_d,K_d)$. 
The results from their paper \cite{HNUW2} are sharp and 
characterize the curse of dimensionality if the domain of integration is 
either the cube, i.e., $K_d=[0,1]^d$, or a convex body with the property
\begin{align}\label{eq:small radius}
\limsup_{d\to\infty} \frac{\rad(K_d)}{\sqrt{d}} < \sqrt{\frac{2}{\pi e}}\,,
\end{align}
where $\rad(K_d)$ denotes the radius of the set $K_d$ (see~\cite[Theorem~4.1]{HNUW2}). 
The latter are sets with rather small radius and include, for instance, 
the case of $\ell_p$-ball domains if $2\leq p < p_0$ where $p_0\approx 170.5186$. 
However, this not only leaves a gap for the remaining class of $\ell_p$-ball 
domains, but also the methods of proof are different for cubes and sets 
satisfying~\eqref{eq:small radius}. 
The main purpose of the present paper is to present a novel and unified 
approach to this problem that will, at the same time, allow us to 
complement and extend the results from \cite{HNUW1,HNUW2} to a wider class 
of domains of integration. In particular, we study the case where these bodies 
satisfy a \emph{uniform $\psi_\alpha$-estimate}, $\alpha\in[1,2]$ (see Section~\ref{sec:BackgroundB} and Proposition~\ref{prop:main}).
This leads to a characterization of the curse of dimensionality for 
$\ell_p$-ball domains in the full regime $2\le p\le\infty$, thereby closing the gap 
in \cite{HNUW2}.

The key observation is that one needs to put the domain of integration in the right 
perspective to unveil and understand those geometric aspects 
that are crucial in 
such a problem. This is where the theory of Asymptotic Geometric Analysis enters the 
stage and provides us with a suitable and natural framework to settle this question. 
To the best of our knowledge, this is the first work revealing such a connection, which 
is promising and of independent interest. The central idea is to exploit the fact that 
volume is highly concentrated in high-dimensional (isotropic) convex bodies. 
There are a number of deep results in this direction, among others, the pioneering works of Parouris \cite{P2006} and Klartag \cite{K2007}. 
The main ingredient in our proof is the famous thin-shell estimate due 
to Gu\'edon and Milman \cite{GM2011}, which shows that the Euclidean norm of an isotropic 
and log-concave random vector $X\in\R^d$ is highly concentrated around its expectation. 
The reason for using their estimate, despite the fact that it is the best known general 
bound, is the sensitivity of our estimates to the involved deviation parameters (see Remark \ref{rem:deviation parameters}). 
Under very natural geometric assumptions on the domain, reflecting a typical framework in
Asymptotic Geometric Analysis, we are able to relax the radial 
condition~\eqref{eq:small radius} from \cite{HNUW2}. 
As will become apparent later on, condition \eqref{eq:small radius} uses, 
although in disguise, the well-known universal lower bound $1/\sqrt{2\pi e}$ for the isotropic constant. But as we shall demonstrate, the characterization of the curse of dimensionality does depend (to some extend) on the isotropic constant of the domain of integration (see also Remark \ref{rem:condition}), which is probably of independent interest again.

\subsection{Main results}
We now specify the geometric framework we shall be working in, followed by a presentation of the main results of this paper. More information on the central (geometric) notions that appear here shall be provided in the preliminaries.

A convex body $K_d\subseteq\R^d$ is a compact, convex set with non-empty interior and shall be called isotropic if $\vol_d(K_d)=1$, the center of mass is at the origin and, 
for some $L_{K_d}\in(0,\infty)$, 
\[
\int_{K_d} \langle x, \theta\rangle^2\dint x \,=\, L_{K_d}^2
\]
for all $\theta\in\SSS^{d-1}$. We call $L_{K_d}$ the isotropic constant of 
$K_d$. A convex body $K_d\subseteq\R^d$ is symmetric if $-x\in K_d$ whenever $x\in K_d$. The radius of such a symmetric convex body is defined as
\[
\rad(K_d) = \sup_{y\in K_d} \|y\|_2.
\]
We say that an isotropic convex body $K_d$ satisfies a 
$\psi_\alpha$-estimate with constant $b_\alpha\in(0,\infty)$, for some $\alpha\in[1,2]$, if 
\[
\|\langle\cdot,\theta\rangle\|_{\psi_\alpha} 
\,\le\, b_\alpha \left(\int_{K_d} |\langle x, \theta\rangle|^2\dint x \right)^{1/2}
\]
for all $\theta\in\SSS^{d-1}$. The Orlicz norm $\|\cdot\|_{\psi_\alpha}$ 
will be defined in Section~\ref{sec:BackgroundB}. For short, we call an isotropic convex body with this property a $\psi_\alpha$-body. 

The first main result is the following. 

\begin{thrm}\label{thm:main}
Let $(K_d)_{d\in\N}$ be a sequence of symmetric, isotropic convex bodies that 
satisfy a uniform $\psi_2$-estimate
and 
\begin{align}\label{eq:small-diameter}
\limsup_{d\to\infty}\, \frac{\rad(K_d)}{\sqrt{d}\,L_{K_d}} \,<\, 2\,.
\end{align}
Then the curse of dimensionality 
holds for the class $\mathscr C^1_d(A_d,B_d,K_d)$ 
if 
\[
\limsup_{d\to\infty}\;\; \min\left\{ A_{d} \, \sqrt{d} \,L_{K_d},\; 
B_{d} \, d \,L_{K_d}^2\right\} \,>\, 0\,.
\] 
\end{thrm}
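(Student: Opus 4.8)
The plan is to construct, for a hypothetical quadrature rule using few nodes, a "fooling function" $f$ in the class $\mathscr C^1_d(A_d,B_d,K_d)$ that vanishes at all the nodes but has integral bounded below by a positive constant independent of $d$; this is the standard lower-bound strategy in Information-based Complexity. So suppose $Q_n(f) = \sum_{i=1}^n a_i f(x_i)$ is an arbitrary linear algorithm using nodes $x_1,\dots,x_n \in K_d$. I would build $f$ as a sum (or rather a carefully chosen combination) of "bump" functions, each supported on a small Euclidean ball of radius $r$ around a well-chosen point, designed so that the bumps avoid all nodes. The radius $r$ is the key tuning parameter: a bump of height $h$ on a ball of radius $r$ has Lipschitz constant of order $h/r$ and its gradient has Lipschitz constant of order $h/r^2$, so to land in $\mathscr C^1_d(A_d,B_d,K_d)$ with $h$ of constant order we need $r \gtrsim \max\{1/A_d, 1/\sqrt{B_d}\}$, and $\|f\|_\infty \le 1$ forces the bumps to have disjoint supports (or to be summed with care).

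The heart of the matter is a volumetric/counting argument: if $n$ is subexponential in $d$, then the $n$ balls of radius $r$ around the nodes cannot cover a constant fraction of the volume of $K_d$, so there is room to place a bump. Here is where the thin-shell estimate of Gu\'edon–Milman and the $\psi_2$-assumption enter, exactly as flagged in the introduction. Because $K_d$ is isotropic with isotropic constant $L_{K_d}$, a random point $X$ uniform in $K_d$ has $\|X\|_2$ concentrated near $\sqrt{d}\,L_{K_d}$; the $\psi_2$-estimate upgrades this to \emph{exponential} concentration of linear functionals, which (combined with the thin-shell bound) gives that the volume of any Euclidean ball of radius $cr$ inside $K_d$ is exponentially small in $d$ relative to $\vol_d(K_d)=1$, \emph{provided} $r$ is small compared to $\sqrt{d}\,L_{K_d}$. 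This is precisely why the hypothesis pairs a uniform $\psi_2$-estimate with the diameter condition \eqref{eq:small-diameter}: the radius $\rad(K_d)$ must be of order $\sqrt{d}\,L_{K_d}$ (not larger), so that the relevant bump radius $r \asymp \max\{1/A_d, 1/\sqrt{B_d}\}$ — which under the hypothesis $\limsup \min\{A_d \sqrt{d} L_{K_d}, B_d d L_{K_d}^2\} > 0$ is indeed $\lesssim \sqrt{d}\,L_{K_d}$ — still produces balls of exponentially small volume. I would isolate this as a lemma: under the stated assumptions there are constants $c_1, c_2 > 0$ such that every Euclidean ball of radius $c_1 \sqrt{d}\, L_{K_d}$ has volume at most $e^{-c_2 d}$ in $K_d$, so $n \le e^{c_2 d /2}$ nodes leave an uncovered region of volume $\ge 1/2$.

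Putting it together: choose $r$ of order $\min\{1, 1/A_d, 1/\sqrt{B_d}\}$ but also capped at $c_1\sqrt{d}\,L_{K_d}$; the case hypothesis guarantees $r$ can be taken of order $\max\{1/A_d,1/\sqrt{B_d}\}$ which is $\le c_1 \sqrt d L_{K_d}$ up to constants, so the covering lemma applies once $n$ is subexponential. Pick a point $x^*$ in the uncovered region whose $r$-ball still lies in $K_d$ (shrinking $r$ by a constant and using that the bulk of $K_d$'s volume sits well inside the boundary), put a single smooth bump of height a constant multiple of $\min\{1, A_d r, B_d r^2\}$ there, and check it lies in $\mathscr C^1_d(A_d,B_d,K_d)$ and has integral $\gtrsim r^d \cdot (\text{height})$. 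Wait — that integral is exponentially \emph{small}, so a single bump is not enough; instead one tiles the uncovered region with $\asymp e^{cd}$ disjoint bumps of alternating relevance, or more simply observes that $Q_n$ can only "see" $n$ of the $\asymp e^{cd}$ disjoint bump positions, so some signed combination of the unseen bumps has integral bounded below by a constant while $Q_n$ evaluates it to zero — this is the genuine curse mechanism. The main obstacle, and the place requiring real care, is the covering lemma: extracting from the Gu\'edon–Milman thin-shell bound and the $\psi_2$-estimate an exponentially small \emph{ball-volume} bound with the correct dependence on $L_{K_d}$, and verifying that the constant $2$ in \eqref{eq:small-diameter} is exactly what makes the bump radius compatible with exponential smallness; the rest is bookkeeping on Lipschitz constants of bump functions and the standard IBC fooling argument.
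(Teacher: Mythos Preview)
Your proposal has a real gap at the decisive step. You correctly note that a single bump has exponentially small integral, but the fix you propose---summing disjoint bumps over the ``unseen'' positions---does not give integral bounded below by a constant, and you offer no argument that it does. For such a sum $f=\sum_{i\in I}\phi_i$ with $\|\phi_i\|_\infty\le1$ one has $\int_{K_d}f\le\vol_d\big(K_d\cap\bigcup_{i\in I}\supp\phi_i\big)$, i.e.\ the fraction of $K_d$ covered by the disjoint ball-supports. Arranging disjoint Euclidean balls of radius $\asymp\sqrt d\,L_{K_d}$ to cover a fixed positive fraction of an isotropic body $K_d$ is itself a nontrivial high-dimensional packing problem, not bookkeeping; signed combinations do not help since you need a large positive integral. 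As sketched, the construction does not produce a fooling function with $\int_{K_d}f\ge\eps_0$.

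The paper's argument is structurally different: rather than building up from bumps, it carves down from the constant function $1$. The fooling function is the single function
\[
f(x)\,=\,p_\delta\bigl(\dist(x,\calC_n)\bigr), \qquad \calC_n:=\conv\{x_1,\dots,x_n\},
\]
where $p_\delta:[0,\infty)\to[0,1]$ is an explicit $C^1$ piecewise-quadratic with $p_\delta(0)=0$ and $p_\delta\equiv1$ on $[\delta\sqrt d,\infty)$. One checks directly that $f\in\mathscr C^1_d(A_d,B_d,K_d)$ for $A_d\asymp(\delta\sqrt d)^{-1}$, $B_d\asymp(\delta^2 d)^{-1}$, and that $f\equiv1$ outside the $\delta\sqrt d$-fattening $\calC_n^{(\delta)}$, whence $\int_{K_d}f\ge1-\vol_d\big(K_d\cap\calC_n^{(\delta)}\big)$.

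The geometric ingredient you are missing is Elekes' covering theorem: the convex hull of any $n$ points in a set of radius $R$ is contained in a union of $n$ Euclidean balls of radius $R/2$. Hence $\calC_n^{(\delta)}$ is covered by $n$ balls of radius $r_d\sqrt d$ with $r_d:=\rad(K_d)/(2\sqrt d)+\delta$; by the symmetry of $K_d$ (via Brunn--Minkowski), each such ball meets $K_d$ in volume at most $\vol_d\big(K_d\cap r_d\sqrt d\,\BB_2^d\big)$. The Gu\'edon--Milman small-ball estimate bounds this by $Ce^{-cd}$ precisely when $\limsup_d r_d/L_{K_d}<1$, and a $\delta>0$ with this property exists exactly under hypothesis~\eqref{eq:small-diameter}. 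Thus the constant $2$ originates from the halving in Elekes' theorem, not from matching a bump radius to the thin shell. One obtains $\int_{K_d}f\ge1-Cne^{-cd}$, which forces $n$ exponential in $d$; a final scaling argument handles general $(A_d),(B_d)$ satisfying the $\limsup$ condition.
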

\bigskip

In addition to these lower bounds on the complexity of numerical integration, 
we will present lower bounds for the more general class of 
$\psi_\alpha$-bodies with $\alpha\in[1,2)$ (see Section~\ref{sec:proof}). 
These bounds are, however, no longer exponential in $d$, but in $d^{\alpha/2}$. 
We shall present this result in Proposition~\ref{prop:main}.

\begin{remark}
In Asymptotic Geometric Analysis terms, our radial condition 
\eqref{eq:small-diameter} simply means that such a body $K_d$ is of 
``small diameter'' (i.e., $\rad(K_d) = \alpha \sqrt{d}\,L_{K_d}$ for some $\alpha\in[1,\infty)$) with a constant strictly smaller than $2$. It follows directly from the isotropic condition that, for any isotropic convex body $K\subseteq \R^d$, $\sqrt{d}\,L_K \leq \rad(K)$. Additionally, as shown by Kannan, Lov\'asz and Simonovits in \cite{KLS1995}, we have the upper bound $\rad(K)\le(d+1)L_K$. We refer to the monograph \cite{IsotropicConvexBodies} for more information.
\end{remark}

\begin{remark}\label{rem:condition}
If we compare the two conditions \eqref{eq:small radius} and 
\eqref{eq:small-diameter} on the radii, then we see that, contrary to 
condition \eqref{eq:small radius} considered in \cite{HNUW2}, 
our condition \eqref{eq:small-diameter} involves the isotropic constant $L_{K_d}$ of the integral domain $K_d$. 
Using the well-known fact that among all isotropic convex bodies the 
(volume normalized) Euclidean ball, we write $\DD_2^d$, minimizes the isotropic 
constant with $L_{\DD_2^d}\geq 1/\sqrt{2\pi e}$ (see, e.g., \cite[Proposition 3.3.1]{IsotropicConvexBodies}), we obtain from our 
assumption \eqref{eq:small-diameter} exactly the radius condition 
\eqref{eq:small radius}. The observation that this universal lower bound on 
the isotropic constant of the convex body under consideration, which appears 
in \eqref{eq:small radius} only in disguise, can be substituted by the 
isotropic constant of the body itself, is the essential step that allows us 
to extend the results from \cite{HNUW2}. At the same time it sheds light on the r\^ole of isotropicity in this problem.
\end{remark}

\begin{remark}
 As will become apparent, Theorem~\ref{thm:main} is essentially Proposition~\ref{prop:main} below in the case $\alpha=2$. As a matter of fact, in the statement of Theorem~\ref{thm:main} the isotropic constants appearing in the two conditions involving the Lipschitz parameters $(A_d)_d$ and $(B_d)_d$ could have been omitted. The reason is that Bourgain proved in \cite{B2003} that if $K$ is a symmetric $\psi_2$-body with constant $b\geq 1$, then its isotropic constant is bounded, even more precisely, $L_K\leq Cb\log(b+1)$ with an absolute constant $C\in(0,\infty)$. However, we chose to include the constants $(L_{K_d})_d$ anyways to underline where the isotropicity enters the scene. Whether or not the isotropic constant is uniformly bounded above by an absolute constant in general is a famous open problem (see Remark \ref{rem:LK}).
\end{remark}

\medskip

We complement our lower bounds by the following concentration result, 
which easily implies an upper bound on the complexity of numerical integration 
in the space $\mathscr C^1_d(A_d,B_d,K_d)$. 
We stress that this result, as the last one, heavily relies on the 
thin-shell estimate of Gu\'edon and Milman \cite{GM2011}.

\begin{thrm}\label{thm:main-upper}
Let $(K_d)_{d\in\N}$ be a sequence of isotropic convex bodies. 
Assume that $(A_d)_{d\in\N}$ and $(B_d)_{d\in\N}$ satisfy
\[
\lim_{d\to\infty}\; \min\left\{ A_{d} \, \sqrt{d} \,L_{K_d},\; 
B_{d} \, d \,L_{K_d}^2\right\} \,=\, 0\,.
\]
Then 
\[
\lim_{d\to\infty}\, \sup_{f\in \mathscr C^1_d(A_d,B_d,K_d)}\, 
\left| \int_{K_d} f(x)\dint x \,-\, f(0) \right| \,=\, 0\,.
\]
In particular, the curse of dimensionality does not hold for $\mathscr C^1_d(A_d,B_d,K_d)$.
\end{thrm}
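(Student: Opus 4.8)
The plan is to notice that Theorem~\ref{thm:main-upper} is really a statement about the one-node cubature rule $Q_d(f) = f(0)$: its worst-case error over $\mathscr C^1_d(A_d,B_d,K_d)$ is exactly $\sup_{f}\bigl|\int_{K_d}f(x)\dint x - f(0)\bigr|$, so once the displayed limit is established, a single function evaluation suffices to integrate with error tending to $0$, and the curse of dimensionality fails. To bound that supremum I would fix an admissible $f$ and derive two independent estimates — one using only the first-order constraint $\Lip(f)\le A_d$, the other using only the second-order constraint $\Lip(D^\theta f)\le B_d$ — and then take the smaller, since every $f$ in the class satisfies both hypotheses at once.

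\emph{First-order estimate.} From $|f(x)-f(0)|\le A_d\|x\|_2$ and the Cauchy--Schwarz (Jensen) inequality,
\[
\Bigl|\int_{K_d}f(x)\dint x - f(0)\Bigr| \;\le\; A_d\int_{K_d}\|x\|_2\dint x \;\le\; A_d\Bigl(\int_{K_d}\|x\|_2^2\dint x\Bigr)^{1/2}.
\]
The crucial identity is $\int_{K_d}\|x\|_2^2\dint x = \sum_{i=1}^d\int_{K_d}\langle x,e_i\rangle^2\dint x = d\,L_{K_d}^2$, which is just isotropicity applied to $\theta = e_i$, so the first-order estimate is $A_d\sqrt d\,L_{K_d}$. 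One could equally route this through the Gu\'edon--Milman thin-shell estimate, which localizes $\|x\|_2$ near $\sqrt d\,L_{K_d}$ and controls the complementary event via $\|f\|_\infty\le1$; the plain second-moment bound already suffices.

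\emph{Second-order estimate.} The hypothesis $\Lip(D^\theta f)\le B_d$ for all $\theta\in\SSS^{d-1}$ is equivalent to $\nabla f$ being $B_d$-Lipschitz on $K_d$ (choose $\theta$ along $\nabla f(x)-\nabla f(y)$). Since $K_d$ is convex with barycenter $0$, the segment $[0,x]$ lies in $K_d$, and second-order Taylor with integral remainder gives
\[
\Bigl|f(x)-f(0)-\langle\nabla f(0),x\rangle\Bigr| = \Bigl|\int_0^1\langle\nabla f(tx)-\nabla f(0),x\rangle\dint t\Bigr| \le \frac{B_d}{2}\|x\|_2^2.
\]
Integrating over $K_d$, the term $\int_{K_d}\langle\nabla f(0),x\rangle\dint x = \langle\nabla f(0),\int_{K_d}x\dint x\rangle$ vanishes because the center of mass is the origin, and $\int_{K_d}\|x\|_2^2\dint x = d\,L_{K_d}^2$ as before, so this estimate is $\tfrac12 B_d\,d\,L_{K_d}^2$.

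Putting the two together, $\sup_{f\in\mathscr C^1_d(A_d,B_d,K_d)}\bigl|\int_{K_d}f(x)\dint x - f(0)\bigr| \le \min\{A_d\sqrt d\,L_{K_d},\,\tfrac12 B_d\,d\,L_{K_d}^2\} \le \min\{A_d\sqrt d\,L_{K_d},\,B_d\,d\,L_{K_d}^2\}$, which tends to $0$ by assumption; the conclusion about the curse then follows from the first paragraph. I do not expect a genuine obstacle: the only routine points are the standard equivalence between the directional-derivative bound and Lipschitzness of $\nabla f$, and checking the validity of the integral form of the Taylor remainder, for which convexity of $K_d$ together with $f\in\mathscr C^1$ having a Lipschitz gradient is exactly what is needed. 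The one place where real care would be required is the thin-shell route promised before the statement: there one must split $\int_{K_d}|f-f(0)|$ over the thin shell and its complement and quantify the tail contribution via $\|f\|_\infty\le1$ — which is precisely what the crude second-moment computation above lets us avoid.
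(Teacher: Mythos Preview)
Your proof is correct and follows essentially the same approach as the paper's: split into a first-order estimate via $|f(x)-f(0)|\le A_d\|x\|_2$ and a second-order estimate via Taylor expansion, cancel the linear term using the barycenter condition, and evaluate $\int_{K_d}\|x\|_2^2\dint x = d\,L_{K_d}^2$ from isotropicity. The only cosmetic differences are that the paper uses the mean value theorem and works directly with $D^{\theta_x}f$ rather than passing through the equivalence $\sup_\theta\Lip(D^\theta f)=\Lip(\nabla f)$, thereby obtaining the bound $B_d\,d\,L_{K_d}^2$ without your extra factor $\tfrac12$.
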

\smallskip

This theorem generalizes the corresponding result \cite[Proposition 4.7]{HNUW2} to 
arbitrary isotropic sets.  
Moreover, the necessary decay of the Lipschitz constants is the same as in 
\cite[Theorem~4.1]{HNUW2} if the hyperplane conjecture were shown to be true (see Remark~\ref{rem:LK}).
\smallskip

The volume-normalized $\ell_p$-balls, $\DD_p^d$, are symmetric and isotropic 
$\psi_2$-bodies whenever $2\leq p \leq \infty$ (see \cite[Proposition 10]{BGMN2005}). 
Moreover, 
we will show in Lemma~\ref{lem:lp-diameter} that those $\DD_p^d$-balls 
satisfy \eqref{eq:small-diameter}. As a consequence, we obtain the 
curse of dimensionality for this important class of integral domains as a 
corollary to Theorem~\ref{thm:main}.

\begin{cor}\label{cor:lp}
Let $2\leq p \leq \infty$. Then the curse of dimensionality holds for the 
classes $\mathscr C^1_d(A_d,B_d,\DD_p^d)$ if and only if 
\[
\limsup_{d\to\infty}\;\; \min\left\{  \sqrt{d}\, A_{d},\; 
d\, B_{d}\right\} \,>\, 0\,.
\] 
\end{cor}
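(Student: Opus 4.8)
The plan is to deduce the corollary by specialising Theorem~\ref{thm:main} (for the ``if'' direction) and Theorem~\ref{thm:main-upper} (for the ``only if'' direction) to $K_d=\DD_p^d$, $2\le p\le\infty$. Three structural facts about these bodies are needed, and all are available: the $\DD_p^d$ are symmetric and isotropic and satisfy a $\psi_2$-estimate with a constant $b$ that can be chosen independently of $d$ and of $p\in[2,\infty]$ (\cite[Proposition 10]{BGMN2005}); they satisfy the small-diameter condition \eqref{eq:small-diameter}, which is exactly the content of Lemma~\ref{lem:lp-diameter}; and their isotropic constants obey $c_1\le L_{\DD_p^d}\le c_2$ for absolute constants $0<c_1\le c_2<\infty$, uniformly in $d$ and $p$. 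The lower bound is the universal bound $L_{\DD_p^d}\ge 1/\sqrt{2\pi e}$ recalled in Remark~\ref{rem:condition}, and the upper bound follows from Bourgain's theorem quoted after Theorem~\ref{thm:main}, applied with the common $\psi_2$-constant $b$, which yields $L_{\DD_p^d}\le Cb\log(b+1)$.

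For the ``if'' direction, assume $\limsup_{d\to\infty}\min\{\sqrt d\,A_d,\, d\,B_d\}>0$. Since $c_1\le L_{\DD_p^d}\le c_2$, the quantity $A_d\sqrt d\,L_{\DD_p^d}$ is comparable, up to the absolute constants $c_1,c_2$, to $\sqrt d\,A_d$, and likewise $B_d\,d\,L_{\DD_p^d}^2$ is comparable to $d\,B_d$; hence
\[
\limsup_{d\to\infty}\min\Big\{A_d\,\sqrt d\,L_{\DD_p^d},\; B_d\, d\,L_{\DD_p^d}^2\Big\}>0
\]
as well. All hypotheses of Theorem~\ref{thm:main} are then met (symmetry, isotropy, uniform $\psi_2$-estimate, and \eqref{eq:small-diameter} through Lemma~\ref{lem:lp-diameter}), so the curse of dimensionality holds for $\mathscr C^1_d(A_d,B_d,\DD_p^d)$.

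For the ``only if'' direction I would argue by contraposition using Theorem~\ref{thm:main-upper}. Suppose $\limsup_{d\to\infty}\min\{\sqrt d\,A_d,\, d\,B_d\}=0$; as the terms are nonnegative this forces $\lim_{d\to\infty}\min\{\sqrt d\,A_d,\, d\,B_d\}=0$, and the same two-sided bound on $L_{\DD_p^d}$ gives $\lim_{d\to\infty}\min\{A_d\sqrt d\,L_{\DD_p^d},\, B_d\,d\,L_{\DD_p^d}^2\}=0$. Since $\DD_p^d$ is an isotropic convex body — the only hypothesis of Theorem~\ref{thm:main-upper} — that theorem yields
\[
\lim_{d\to\infty}\,\sup_{f\in\mathscr C^1_d(A_d,B_d,\DD_p^d)}\Big|\int_{\DD_p^d}f(x)\dint x - f(0)\Big| = 0 ,
\]
so the single-node algorithm $f\mapsto f(0)$ already has worst-case error tending to $0$; in particular the curse of dimensionality does not hold. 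Combining the two directions proves the corollary.

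The only genuinely substantial ingredient is Lemma~\ref{lem:lp-diameter}; within the proof of the corollary everything reduces to bookkeeping with the absolute constants bounding $L_{\DD_p^d}$, the one mild subtlety being that these bounds — and the $\psi_2$-constant — must be uniform over the entire range $2\le p\le\infty$, which is guaranteed by \cite[Proposition 10]{BGMN2005} together with Bourgain's estimate. I expect the proof of \eqref{eq:small-diameter} itself to proceed via the standard volume and second-moment asymptotics of $\ell_p^d$-balls, which identify $\lim_{d\to\infty}\rad(\DD_p^d)/(\sqrt d\,L_{\DD_p^d})$ as an explicit function of $p$ equal to $1$ at $p=2$ and tending to $\sqrt 3$ as $p\to\infty$, hence strictly below $2$ throughout.
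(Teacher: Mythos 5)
Your proposal is correct and follows the same route as the paper: the authors likewise deduce the corollary by checking that the $\DD_p^d$ satisfy the hypotheses of Theorem~\ref{thm:main} (via Proposition~\ref{prop:psi2 bodies} and Lemma~\ref{lem:lp-diameter}) and of Theorem~\ref{thm:main-upper}, and by using the two-sided boundedness of $L_{\DD_p^d}$ to pass between the $L_{K_d}$-weighted conditions of the theorems and the $L_{K_d}$-free condition of the corollary. The paper's write-up of the corollary is just a pointer to these ingredients; you have simply spelled out the bookkeeping that it leaves implicit.
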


\bigskip

\begin{remark}
We note that there is a mistake in the formulation of the
corresponding result from~\cite{HNUW2}. The assumptions on $L_{0,d}$ and $L_{1,d}$ 
in~\cite[Theorem~4.1]{HNUW2} (which we call $A_d$ and $B_d$) 
are not enough and should be replaced by an assumption as above.
\end{remark}

\medskip

The rest of the paper is organized as follows. 
In the next section, Section \ref{Sec:prelim and notation}, 
we provide background material from both Asymptotic Geometric Analysis and 
Information-based Complexity. Having a broad readership in mind, 
we do this in slightly more detail than strictly necessary for the proof of our results.
In Section~\ref{sec:aux}, we present some auxiliary results in form of radial and volume estimates. The final Sections~\ref{sec:proof}--\ref{sec:proof-upper} 
are then devoted to the proofs of the main results mentioned above.\\

\medskip
\section{Preliminaries and notation}\label{Sec:prelim and notation}

In this section we will present the notions and concepts from Information-based Complexity and Asymptotic Geometric Analysis needed throughout this work. Keeping a broad readership in mind, we try to keep this as detailed and self-contained as possible.

\subsection{Background A -- Information-based Complexity} \label{sec:IBC}

Information-based Complexity (IBC) studies optimal algorithms and computational complexity for continuous problems like finding solutions of differential equations, integration and approximation, arising in different areas of application. The emphasis is on the dependence of the minimal error achievable within a certain class of algorithms using a given budget of $n$ informations about, e.g., the function to integrate or approximate. Typically, the functions under consideration depend on many variables, so also the dependence on the number $d$ of variables, the dimension of the problem, is crucial.
For recent monographs thoroughly treating many modern results in IBC, we refer the reader to \cite{NW08,NW10,NW12}.

In this subsection, we recall the necessary notions from IBC to precisely define our problem and explain notation already used in formulating the main results in the introduction.

\medskip
\subsubsection{The setting}

Let $\mathscr F_d$  be a class 
of continuous and integrable functions $f:K_d\to\R$, where $K_d\subseteq \R^d$ is measurable and of unit volume. For $f\in \mathscr F_d$,
we want to approximate the integral  
$$
S_d(f) = \int_{K_d} f(x)  \dint x
$$
by algorithms
$$
A_{n,d}(f)=\phi_{n,d}\big(f(x_1),f(x_2),\dots,f(x_n)\big),  
$$
where $\phi_{n,d}:\R^n\to \R$ is an arbitrary mapping and $x_j\in K_d$, $j\in\{1,\dots,n\}$ can be chosen adaptively. Adaptively means that the selection of $x_j$ may depend on the already computed values $f(x_1),f(x_2),\dots,f(x_{j-1})$.
The (worst case) error of approximation of the algorithm $A_{n,d}$ is defined as
$$
e(A_{n,d}):=\sup_{f\in \mathscr F_d}\big|S_d(f)-A_{n,d}(f)\big|.
$$

\medskip
\subsubsection{Complexity}

For $\varepsilon>0$, the information complexity, $n(\eps,\mathscr F_d)$, is 
the minimal number of function values needed to guarantee that the error is
at most $\eps$, i.e., 
$$
n(\eps,\mathscr F_d):=\min\big\{\,n\in\N \, :\,  \exists\ A_{n,d}\ \ \mbox{such that}\ \ 
e(A_{n,d})\le\eps\big\}.
$$
Hence, we minimize $n$ over all possible choices of adaptive sample points $x_1,\dots,x_n\in K_d$ and mappings $\phi_{n,d}:\R^n\to \R$.
\begin{remark}
It is known by the result of Smolyak \cite{S65} on nonlinear algorithms and the result of Bahvalov \cite{B71} on adaption that,
as long as the class $\mathscr F_d$
is convex and symmetric, we may restrict the minimization of $n$ by
considering only \emph{nonadaptive} choices of $x_1,\dots, x_n$ and \emph{linear} mappings $\phi_{n,d}$ (see also \cite{NW08,TWW88}).
In this case, we have 
\begin{equation}\label{eq:complexity}
n(\eps,\mathscr F_d)=\min\Bigg\{\,n\in\N \,:\, \inf_{x_1,\dots,x_n\in K_d}\ \sup_{f\in \mathscr F_d,\,
f(x_j)=0\atop{\,1\leq j\leq n}} |S_d(f)|\le \eps\Bigg\}, 
\end{equation}
see, e.g., \cite[Lemma 4.3]{NW08}.
\end{remark} 

In this paper, we always consider convex and symmetric $\mathscr F_d$ 
so that we can use the previous formula for the information complexity $n(\eps,\mathscr F_d)$. 
It is also well known that for convex and symmetric $\mathscr F_d$ the total
complexity, i.e., the minimal cost of computing an $\eps$-approximation, 
insignificantly differs from the information
complexity.
For more details see, for instance, \cite[Section 4.2.2]{NW08}.

\medskip
\subsubsection{The curse of dimensionality}
By the curse of dimensionality we mean that the information complexity 
$n(\eps,\mathscr F_d)$ is exponentially large in $d$. 
That is, there are positive numbers $c$, $\eps_0$ and $\gamma$ such that
\begin{equation}\label{curse}
n(\eps,\mathscr F_d) \ge c \, (1+\gamma)^d\ \ \ \ 
\mbox{for all}\ \ \ \eps \le \eps_0\ \  \mbox{and infinitely many}\ \ d\in \N. 
\end{equation}
There are many classes $\mathscr F_d$ for which the curse of dimensionality has 
been proved for numerical integration and other multivariate problems 
(see~\cite{NW08,NW10,NW12} for such examples). From a computational point of view, the curse of dimensionality renders the problem intractable in high dimensions. 

\medskip
\subsection{Background B -- Asymptotic Geometric Analysis} \label{sec:BackgroundB}

We present here the background material from Asymptotic Geometric Analysis, 
which we organize by subtopic.  

\subsubsection{Convex bodies}

We shall be working in $\R^d$ equipped with the standard Euclidean structure $\langle \cdot,\cdot\rangle$ and use the notation $\vol_d(A)$ to indicate the $d$-dimensional Lebesgue measure of a Borel subset $A$ of $\R^d$. For $A\subseteq \R^d$, we define its radius to be
\[
\rad(A) := \inf_{x\in\R^d} \sup_{y\in A} \|y-x\|_2.
\]
For symmetric convex sets $A\subseteq\R^d$, that is, sets that are convex and for which $-x\in A$ whenever $x\in A$, this simplifies to
\[
\rad(A) = \sup_{y\in A} \|y\|_2.
\]
A convex body $K\subseteq \R^d$ is a compact and convex set with non-empty interior. We write $\SSS^{d-1} = \{ x \in \R^d : \|x\|_2 = 1\}$ for the Euclidean unit sphere in $\R^d$ and $\sigma:=\sigma_{d-1}$ for the uniform probability measure on $\SSS^{d-1}$. A convex body is said to be isotropic (or in isotropic position) if $\vol_d(K)=1$, its center of mass is at the origin, i.e., 
\[
\int_Kx \dint x=0
\]
and it satisfies the isotropic condition, i.e., there exsits a constant $L_K\in(0,\infty)$ such that, for all $\theta\in\SSS^{d-1}$,
\[
\int_K\langle x,\theta\rangle^2 \dint x=L_K^2.
\]
We call $L_K$ the isotropic constant of $K$. 

\begin{remark}
There are several classical positions of convex bodies, e.g., John's position or the $M$-position, many of which arise as solutions to extremal problems. The isotropic position first arose from classical mechanics in the $19$th century and has different research directions connected with it, one being the distribution of volume in convex bodies. In this work, we present yet another and new connection. For more information on the isotropic position we refer to \cite[Chapter 2]{AymptoticGeometricAnalysis}.
\end{remark}
\begin{remark}\label{rem:LK}
While the Euclidean ball minimizes the isotropic constant with $L_{\DD_2^d}\geq 1/\sqrt{2\pi e}$, whether or not the isotropic constant is uniformly bounded above by an absolute constant in general is a famous open problem first posed by Bourgain in \cite{B1991}. He obtained a general upper bound of order $\sqrt[4]{d}\log d$, the proof being based on the $\psi_1$-behavior of linear functionals on convex bodies (see Subsection \ref{subsec:psi_alpha} for a definition). This was improved by Klartag to $\sqrt[4]{d}$ in \cite{K2006}. Interestingly, while the problem remains open, there is no example of a convex body $K$ with $L_K>1$.
\end{remark}

\smallskip
\subsubsection{Isotropic log-concave probability measures}

We say that a Borel probability measure $\mu$ on $\R^d$, which is absolutely continuous with respect to the Lebesgue measure, is centered if for all $\theta\in \SSS^{d-1}$,
\[
\int_{\R^d} \langle x,\theta \rangle \dint\mu(x) = 0.
\]
We say that $\mu$ is log-concave if, for all compact subsets $A$ and $B$ of $\R^d$ and all $\lambda\in(0,1)$,
\[
\mu\big((1-\lambda)A + \lambda B\big) \geq \mu(A)^{1-\lambda}\mu(B)^\lambda.
\] 
A function $f:\R^d \to [0,\infty)$ is said to be log-concave if it satisfies 
\[
f\big((1-\lambda)x + \lambda y\big) \geq f(x)^{1-\lambda}f(y)^\lambda
\] 
for all $x,y\in\R^d$, $\lambda\in(0,1)$. We call it a log-concave density if additionally $\int_{\R^d} f(x) \dint x=1$. 

It was shown by Borell in \cite{Borell75} that any non-degenerate log-concave probability measure $\mu$ on $\R^d$ (meaning it is not fully supported on any hyperplane) is absolutely continuous with respect to the Lebesgue measure and has a log-concave density $f_\mu$, i.e., $\dint\mu(x) = f_\mu(x)\dint x$ (see also \cite[Theorem 2.1.2]{IsotropicConvexBodies}). Let $(\Omega,\mathcal A,\Pro)$ be a probability space. A random vector $X:\Omega\to\R^d$ will be called log-concave if its distribution $\mu(\cdot)=\Pro(X\in \cdot)$ is a log-concave probability measure on $\R^d$. 

A Borel probability measure $\mu$ on $\R^d$, which is absolutely continuous with respect to the Lebesgue measure, is said to be isotropic if it is centered and satisfies the isotropic condition
\[
\int_{\R^d}\langle x,\theta \rangle^2\dint\mu(x) = 1,
\] 
for all $\theta\in \SSS^{d-1}$. 
A log-concave random vector $X$ in $\R^d$ is said to be isotropic if its distribution is isotropic, i.e., if $\E(X)=0$ and $\E(X\otimes X)=\id_d$, where $\id_d$ denotes the $d\times d$ identity matrix.

\begin{remark}\label{examples isotropic log-concave}
Notice that a convex body is isotropic if and only if the uniform probability measure on $\frac{K}{L_K}$ is isotropic. Examples of isotropic log-concave random vectors are standard Gaussian random vectors  or  random  vectors  uniformly  distributed  in $\frac{K}{L_K}$, where $K$ is an isotropic convex body and $L_K$ its isotropic constant.
\end{remark}

\medskip
\subsubsection{$\psi_\alpha$-estimates for linear functionals} \label{subsec:psi_alpha}

Let $\alpha\in[1,2]$ and let $\mu$ be a probability measure on $\R^d$. For a measurable function $f:\R^d\to\R$ define the Orlicz norm $\|\cdot\|_{\psi_\alpha(\mu)}$ by
\[
\|f\|_{\psi_\alpha(\mu)} := \inf\bigg\{ \lambda>0 : \int_{\R^d} e^{|f(x)/\lambda|^\alpha} \dint\mu(x) \leq 2 \bigg\}.
\]
Given some $\theta\in \SSS^{d-1}$ one says that $\theta$ defines a $\psi_\alpha$-direction for $\mu$ with constant $C\in(0,\infty)$ if $f_\theta(x)=\langle x,\theta \rangle$ satisfies
\[
\|f_\theta\|_{\psi_\alpha(\mu)} \leq C \left( \int_{\R^d} |f_\theta(x)|^2\dint\mu(x) \right)^{1/2}.
\]


Let $K\subseteq \R^d$ be a convex body with centroid at the origin. Such a body is said to be a $\psi_\alpha$-body with constant $b_\alpha\in(0,\infty)$ if all directions $\theta\in \SSS^{d-1}$ are $\psi_\alpha$ with constant $b_\alpha$, with respect to the uniform probability measure on $K$. 
\begin{remark}
Every isotropic convex body $K_d$ or, more generally, each log-concave probability measure on $\R^d$, satisfies a $\psi_1$-estimate with some universal constant $C\in(0,\infty)$ that is independent of $d$ and $K_d$ (see, e.g., \cite[Section 3.2.3]{IsotropicConvexBodies}).
\end{remark}

Regarding the unit balls of finite-dimensional $\ell_p$ spaces, the following result was obtained by Barthe, Gu\'edon, Mendelson, and Naor \cite[Proposition 10]{BGMN2005}.

\begin{prop}\label{prop:psi2 bodies}
There exists $C\in(0,\infty)$ such that for every $d\in\N$ and every $p\geq2$, $\BB_p^d$ is a $\psi_2$-body with constant $C$.
\end{prop}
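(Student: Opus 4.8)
The plan is to run the probabilistic model of the uniform measure on $\BB_p^d$ together with the $\ell_p$-radial symmetry it enjoys, which lets one cheaply transfer a $\psi_2$-estimate from $\R^d$ down to the $\ell_p$-sphere. Fix $\theta\in\SSS^{d-1}$; I must bound $\|\langle\cdot,\theta\rangle\|_{\psi_2}$ with respect to the uniform probability measure on $\BB_p^d$ by an absolute multiple of its $L^2$-norm, with the constant independent of $d$ and $p$. Let $g_1,\dots,g_d$ be i.i.d.\ with the log-concave density proportional to $e^{-|t|^p}$, let $W$ be an independent standard exponential, put $G=(g_1,\dots,g_d)$, and recall that $X:=G/(\|G\|_p^p+W)^{1/p}$ is uniform on $\BB_p^d$. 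Since the density of $G$ depends on $G$ only through $\|G\|_p^p$, the angular part $\Theta:=G/\|G\|_p$ is independent of the radial part $\|G\|_p$; also $\|G\|_p^p=\sum_i|g_i|^p\sim\mathrm{Gamma}(d/p,1)$. Because $(\|G\|_p^p+W)^{1/p}\ge\|G\|_p$, one has $|\langle X,\theta\rangle|\le|\langle\Theta,\theta\rangle|$ pointwise, so it suffices to bound $\|\langle\Theta,\theta\rangle\|_{\psi_2}$ from above and $\|\langle X,\theta\rangle\|_{L^2}$ from below.

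For the numerator the estimate is routine: for $p\ge2$ one has $\|g_i\|_{\psi_2}\le C_0$ with $C_0$ absolute (the density's normalizing constant lies in a fixed interval for $p\ge2$, and $|t|^p\ge t^2$ for $|t|\ge1$ gives a sub-Gaussian tail), and since the $g_i$ are independent and symmetric with $\sum_i\theta_i^2=1$, the standard bound $\|\sum_i\theta_i g_i\|_{\psi_2}^2\lesssim\sum_i\theta_i^2\|g_i\|_{\psi_2}^2$ yields $\|\langle G,\theta\rangle\|_{\psi_2}\le C_1$ with $C_1$ absolute; equivalently $\E\exp(\langle G,\theta\rangle^2/C_1^2)\le2$. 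Now exploit the factorisation $\langle G,\theta\rangle=\|G\|_p\cdot\langle\Theta,\theta\rangle$ into \emph{independent} factors: if $\rho$ is a median of $\|G\|_p$, then
\[
2\;\ge\;\E\exp\!\left(\frac{\|G\|_p^2\,\langle\Theta,\theta\rangle^2}{C_1^2}\right)\;\ge\;\Pro\!\left(\|G\|_p\ge\rho\right)\,\E\exp\!\left(\frac{\rho^2\,\langle\Theta,\theta\rangle^2}{C_1^2}\right)\;\ge\;\frac12\,\E\exp\!\left(\frac{\langle\Theta,\theta\rangle^2}{(C_1/\rho)^2}\right),
\]
where the second inequality is Fubini (using $\Theta\perp\|G\|_p$) plus $\E[\exp(aY)]\ge e^{a\rho^2}\,\Pro(Y\ge\rho^2)$ applied to $Y=\|G\|_p^2$. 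Hence $\E\exp(\langle\Theta,\theta\rangle^2/(C_1/\rho)^2)\le4$, and so $\|\langle\Theta,\theta\rangle\|_{\psi_2}\le C_2\,C_1/\rho$ for an absolute $C_2$.

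Two more elementary estimates close the loop. First, $\rho^p$ is a median of $\mathrm{Gamma}(d/p,1)$, and a median of $\mathrm{Gamma}(k,1)$ is bounded below by a positive absolute multiple of $k$ for $k\ge1$ and by $c_0^{1/k}$ for $0<k<1$ (with $c_0\in(0,1)$ absolute); a short calculation then gives $\rho\ge c\,d^{1/p}$ with $c>0$ absolute, so $C_1/\rho\le(C_1/c)\,d^{-1/p}$. Second, by the permutation and reflection symmetries of $\BB_p^d$ together with the $\ell_p$-radial symmetry one has the explicit identity $\E\langle X,\theta\rangle^2=\E X_1^2=\frac{d}{d+2}\cdot\frac{\Gamma(3/p)\,\Gamma(d/p)}{\Gamma(1/p)\,\Gamma((d+2)/p)}$, and Gautschi-type inequalities for ratios of $\Gamma$-values show this is at least $c'\,d^{-2/p}$ with $c'>0$ absolute. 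Combining, $\|\langle X,\theta\rangle\|_{\psi_2}\le\|\langle\Theta,\theta\rangle\|_{\psi_2}\le C_2C_1/\rho\le(C_2C_1/c)\,d^{-1/p}\le\frac{C_2C_1}{c\sqrt{c'}}\,\|\langle X,\theta\rangle\|_{L^2}$, which is the assertion with $C:=C_2C_1/(c\sqrt{c'})$.

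The conceptual heart — and the step needing the most care — is the transfer in the second paragraph: it works precisely because $G$ is $\ell_p$-radially symmetric, so that $\Theta$ and $\|G\|_p$ are independent and the cheap $\psi_2$-bound for the $\R^d$-functional $\langle G,\theta\rangle$ descends to the sphere after dividing by a typical value $\rho$ of $\|G\|_p$. The one genuinely delicate point in the whole argument is the uniform lower bound $\rho\ge c\,d^{1/p}$ when the shape parameter $k=d/p$ of the Gamma distribution is smaller than $1$ (equivalently, when $p\gtrsim d$, so that $\BB_p^d$ is close to the cube), since there the mean is a poor proxy for the median and one must instead use the tail bound $\gamma(k,m)\le m^k/k$ for the lower incomplete Gamma function. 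Everything else — the sub-Gaussianity of a single $g_i$, the sum estimate for $\langle G,\theta\rangle$, and the $\Gamma$-function computation of the second moment — is standard bookkeeping with absolute constants.
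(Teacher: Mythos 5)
The paper does not prove Proposition~\ref{prop:psi2 bodies}: it is quoted directly from Barthe, Gu\'edon, Mendelson and Naor \cite[Proposition~10]{BGMN2005}, so there is no in-paper argument to compare against. Your proof is correct and self-contained, and it is in the same spirit as the original BGMN argument: you use the BGMN representation $X=G/(\|G\|_p^p+W)^{1/p}$ of the uniform measure on $\BB_p^d$, the radial/angular factorisation $\langle G,\theta\rangle=\|G\|_p\,\langle\Theta,\theta\rangle$ into \emph{independent} factors (Schechtman--Zinn), the uniform-in-$p$ (for $p\ge2$) sub-Gaussianity of the coordinate density $\propto e^{-|t|^p}$ to control $\|\langle G,\theta\rangle\|_{\psi_2}$ by an absolute constant, and a clean ``divide by a median of $\|G\|_p$'' argument to descend the $\psi_2$-bound from the ambient linear functional to the cone-measure functional $\langle\Theta,\theta\rangle$. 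The two quantitative inputs close the loop correctly: the lower bound $\rho\ge c\,d^{1/p}$ for a median $\rho$ of $\|G\|_p$ --- where you rightly flag the small-shape-parameter regime $d/p<1$ as the delicate case and handle it via $\gamma(k,m)\le m^k/k$ --- and the second-moment lower bound $\E\langle X,\theta\rangle^2\ge c'\,d^{-2/p}$, which is exactly the Gautschi estimate of the paper's Lemma~\ref{lem:gdp}. One small omission: the argument as written requires $p<\infty$ (it uses the density $e^{-|t|^p}$), whereas the paper invokes the proposition also for $p=\infty$; for completeness you should add the one-line remark that for $p=\infty$ the coordinates of a uniform point in $[-1,1]^d$ are i.i.d.\ and bounded, hence $\psi_2$ with an absolute constant, Hoeffding then bounds $\|\langle X,\theta\rangle\|_{\psi_2}$, and $\E\langle X,\theta\rangle^2=1/3$.
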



\medskip
\subsubsection{Thin-shell estimates}

One of the driving forces of the theory of Asymptotic Geometric Analysis is the isotropic constant or hyperplane conjecture and with it the question of how volume is distributed in high-dimensional isotropic convex bodies. The last decade has seen major contributions in this direction. Among the most important ones are Paouris' result on the tail behavior of the Euclidean norm of an isotropic log-concave random vector \cite{P2006}, Klartag's thin-shell estimate that resolved the central limit problem for log-concave measures \cite{K2007}, and the concentration results for the Euclidean norm of an isotropic log-concave random vector due to Gu\'edon and Milman \cite{GM2011}. The latter result, more precisely \cite[Theorem 1.1]{GM2011}, plays a crucial r\^ole in our proofs. Denoting by $\|\cdot\|_{\textrm{HS}}$ the Hilbert-Schmidt norm and by $\|\cdot\|_{\textrm{op}}$ the operator norm, their result reads as follows.

\begin{thrm}\label{thm:GM11}
Let $X$ be an isotropic random vector in $\R^d$ with log-concave density, which 
is in addition $\psi_{\alpha}$, for some $\alpha\in[1,2]$, with constant 
$b_\alpha\in(0,\infty)$. Assume that $A\in \R^{d\times d}$ satisfies $\|A\|_{\textrm{HS}}^2 = d$. 
Then, for all $t\geq 0$,
\[
\Pro\Big( \big|\|AX\|_2-\sqrt{d}\,\big| \geq t\sqrt{d}\,\Big) \leq C \exp\big(-c\eta^{\frac{\alpha}{2}}\min\{t^{2+\alpha},t \}\big) ,
\]
where
\[
\eta:= \frac{d}{\|A\|_{\textrm{op}}^2\,b_\alpha^2}\,.
\]
\end{thrm}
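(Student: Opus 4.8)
Theorem~\ref{thm:GM11} is \cite[Theorem~1.1]{GM2011}, so rather than reproving it I only describe the route one would take. The plan is to reduce the two-sided deviation estimate to \emph{moment} bounds for the Euclidean norm of $Y:=AX$ and then to convert moments into tails via Markov's inequality, optimising over the order of the moment. The preliminary reductions are: $Y$ is centred and log-concave, since a linear image of a log-concave vector is log-concave; the centring value $\sqrt d$ is the correct one because
\[
\E\,\norm{Y}_2^2 \;=\; \E\,\skp{A^{\top}A\,X,\,X} \;=\; \operatorname{tr}\!\big(A^{\top}A\,\E[X\otimes X]\big) \;=\; \operatorname{tr}(A^{\top}A) \;=\; \norm{A}_{\textrm{HS}}^2 \;=\; d ;
\]
and for every $\theta\in\SSS^{d-1}$ one has $\skp{Y,\theta}=\skp{X,A^{\top}\theta}$, so the $\psi_\alpha$-estimate for $X$ bounds the $\psi_\alpha$-norm of $\skp{Y,\theta}$ by $b_\alpha\,\norm{A^{\top}\theta}_2\le b_\alpha\,\norm{A}_{\textrm{op}}$. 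Together with $\E\,\norm{Y}_2^2=d$, this is exactly why $\eta=d/(\norm{A}_{\textrm{op}}^2\,b_\alpha^2)$ takes on the role of an ``effective concentration dimension''.

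The core of the argument is to estimate $I_q:=\big(\E\,\norm{Y}_2^q\big)^{1/q}$ for $q>0$ and for $q<0$. For $q>0$ I would run the Paouris machinery \cite{P2006} through the $L_q$-centroid bodies of the law of $Y$: one obtains $I_q\le C\sqrt d$ as long as $q$ stays below a threshold which, thanks to the $\psi_\alpha$-estimate, is of order $\eta^{\alpha/2}$ rather than merely $\sqrt d$ as in the general log-concave case. For $q<0$ one uses the small-ball / negative-moment estimates available for log-concave vectors to obtain a matching lower bound $I_q\ge c\sqrt d$ on a comparable range of $|q|$. Plugging these into Markov's inequality applied to $\norm{Y}_2^q$ (with $q>0$ for the upper tail and $q<0$ for the lower tail) and then choosing $q$ optimally as a function of $t$ yields the claimed bound: for $t$ of order one the optimal moment sits at the top of the admissible range and one recovers the linear Paouris-type tail $\exp(-c\,\eta^{\alpha/2}t)$, while for smaller $t$ the optimal $q$ is interior and the fine behaviour of $q\mapsto I_q$ near $q=0$ produces the intermediate-deviation exponent $\exp(-c\,\eta^{\alpha/2}t^{2+\alpha})$; the $\min$ in the statement merely records which of the two regimes is active.

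The genuinely deep ingredient — and the step I expect to be the main obstacle — is the sharp control of $q\mapsto I_q$ for $|q|$ small, i.e. the thin-shell (variance) bound for $\norm{Y}_2$; this is where Klartag's theorem \cite{K2007} and the regularity of the centroid bodies $Z_q$ enter, and extracting the correct $\alpha$-dependent power of $\eta$ requires combining that regularity with the $\psi_\alpha$-hypothesis in a quantitatively tight way. The remaining steps — the linear change of variables by $A$, the passage from moments to tails, and the split into the near and far deviation regimes — are comparatively routine once those inputs are in place. For the complete proof we refer to \cite{GM2011}.
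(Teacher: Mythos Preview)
The paper does not prove this theorem at all: it is quoted verbatim as \cite[Theorem~1.1]{GM2011} and used as a black box. Your proposal is consistent with this, since you also identify the statement as \cite[Theorem~1.1]{GM2011} and ultimately defer to that reference for the full argument; the sketch you add of the Gu\'edon--Milman strategy (reduction to positive and negative moment bounds for $\|AX\|_2$, Paouris' $L_q$-centroid body machinery for large $q$, the thin-shell/variance input for small $|q|$, and Markov plus optimisation over $q$) is a fair high-level summary of that paper's approach, though of course the delicate part---the sharp $\alpha$-dependent control of $I_q$ near $q=0$---is precisely what you flag as the main obstacle and do not carry out.
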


This result combined with Paouris' theorem (see \cite[Theorem 1.3]{Pao2012}), gives the following deviation and small-ball estimate, respectively:
\vskip 1mm
\noindent For all $t\geq 0$,
\begin{align}\label{eq:large devi}
\Pro\Big( \|AX\|_2 \geq (1+t)\sqrt{d}\Big) \leq \exp\big(-c_1\eta^{\frac{\alpha}{2}}\min\{t^{2+\alpha},t \}\big),
\end{align}
and, for all $t\in [0,1]$, 
\begin{align}\label{eq:small ball}
\Pro\Big( \|AX\|_2 \leq (1-t)\sqrt{d}\Big) \leq C \exp\Big(-c_2\eta^{\frac{\alpha}{2}}\max\{t^{2+\alpha},\log\frac{c_3}{1-t} \}\Big),
\end{align}
where $c_1,c_2,c_3,C\in(0,\infty)$ are absolute constants.

\begin{remark}\label{rem:deviation parameters}
It will be essential later that \eqref{eq:small ball} holds for \emph{any} deviation parameter $t\in[0,1]$. While Paouris' small-ball estimate from \cite{P2006} is of the same flavor, it only holds for $t\geq C>0$ for some absolute constant $C\in(0,\infty)$, while it is crucial in our proofs to take $t$ arbitrarily small. 
\end{remark}

\medskip
\subsubsection{Geometry of $\ell_p$-balls}\label{sec:lp}
Let $d\in\N$ and consider the $d$-dimensional space $\R^d$. For any $1\leq p\leq\infty$, the $\ell_p^d$-norm, $\|\cdot\|_p$, of a vector $x=(x_1,\ldots,x_d)\in\R^d$ is given by
\[
\|x\|_p := \begin{cases}
\Big(\sum\limits_{i=1}^d|x_i|^p\Big)^{1/p} &: p<\infty\\
\max\{|x_1|,\ldots,|x_d|\} &: p=\infty\,.
\end{cases}
\]
We will denote by $\BB_p^n:=\big\{x=(x_1,\dots,x_d)\in\R^d\,:\,\|x\|_p\leq 1\big\}$ the unit ball in $\ell_p^d$ and by $\DD_p^d$ the volume-normalized version, i.e., 
\[
\DD_p^d :=  \frac{\BB_p^d}{\vol_d(\BB_p^d)^{1/d}}\,.
\]
Recall that
\[
\vol_d(\BB_p^d) = \frac{2^d\Gamma(1+1/p)^d}{\Gamma(1+d/p)}\qquad\text{and}\qquad \rad(\BB_p^d)=d^{\max\{\frac{1}{2}-\frac{1}{p},0 \}}.
\] 
Thus, if we define 
\begin{equation}\label{eq:adp}
\alpha_{d,p}:= \vol_d(\BB_p^d)^{-1/d} = \frac{\Gamma(1+d/p)^{1/d}}{2\Gamma(1+1/p)},
\end{equation}
then we can write
\[
\DD_p^d = \left\{x\in\R^d : \|x\|_p \leq \alpha_{d,p} \right\}.
\]
The {cone (probability) measure} $\bM_{\BB_p^d}$ on $\BB_p^d$ is defined as
$$
\bM_{\BB_p^d}(B) = \frac{\vol_d\big(\{rx:x\in B\,,0\leq r\leq 1\}\big)}{\vol_d(\BB_p^d)}\,,
$$
where $B\subseteq\SSS_p^{d-1}$ is a Borel subset. We remark that $\bM_{\BB_p^d}$ coincides with the normalized surface measure on $\SSS_p^{d-1}$ if and only if $p=1$, $p=2$ or $p=\infty$. For a more detailed account to the relationship between the cone and the surface measure on $\ell_p$-balls we refer the reader to \cite{N,NR2002}.

We shall also use the following polar integration formula, stated here only for the case of $\ell_p^d$-balls:
	\begin{align}\label{eq:polar integration p-balls}
	\int_{\R^d}f(x)\,\dint x = d\,\vol_d(\BB_p^d)\int_0^\infty\int_{\SSS_p^{d-1}}f(ry)\,r^{d-1}\,\dint\bM_{\BB_p^d}(y)\dint r,
	\end{align}
where $f:\R^d\to\R$ is a non-negative measurable function (in fact, this may alternatively be used as a definition for the cone measure $\bM_{\BB_p^d}$ on $\BB_p^d$).

Let us rephrase the following result of Schechtman and Zinn \cite[Lemma 1]{SZ} (independently obtained by Rachev and R\"uschendorf in \cite{RR91}) that provides a probabilistic representation of the cone measure $\bM_{\BB_p^d}$ of the unit ball of $\ell_p^d$ (see also \cite{BGMN2005} for an extension) and shall be used later with the previous polar integration formula.

\begin{prop}\label{prop:SZ}
Let $d\in\N$, $1\leq p < \infty$, and $g_1,\dots,g_d$ be independent real-valued random variables that are distributed according to the density
\[
f(t) = \frac{e^{-|t|^p}}{2\Gamma\big(1+{1\over p}\big)}\,, \qquad t\in\R\,.
\]
Consider the random vector $G=(g_1,\dots,g_d)\in\R^d$ and put $Y:=G/\|G\|_p$. Then $Y$ is independent of $\|G\|_p$ and has distribution $\bM_{\BB^d_p}$.
\end{prop}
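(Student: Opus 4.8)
\emph{Proof plan.} The plan is to test the joint law of the pair $(Y,\|G\|_p)$ against product functions. Fix nonnegative bounded measurable $\phi\colon\SSS_p^{d-1}\to[0,\infty)$ and $\psi\colon[0,\infty)\to[0,\infty)$; it suffices to show that $\E\big[\phi(Y)\,\psi(\|G\|_p)\big]$ factors as a function of $\phi$ alone times a function of $\psi$ alone, with the $\phi$-factor equal to $\int_{\SSS_p^{d-1}}\phi\,\dint\bM_{\BB_p^d}$. This simultaneously yields independence of $Y$ and $\|G\|_p$ and identifies the law of $Y$ as $\bM_{\BB_p^d}$.

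First I would record that, the coordinates $g_1,\dots,g_d$ being independent with the prescribed density, $G$ has joint density $x\mapsto c^{-d}e^{-\|x\|_p^p}$ on $\R^d$ with $c:=2\Gamma(1+1/p)$, so that (the event $\{G=0\}$ being Lebesgue-null and hence negligible)
\[
\E\big[\phi(Y)\,\psi(\|G\|_p)\big] \;=\; c^{-d}\int_{\R^d}\phi\!\left(\frac{x}{\|x\|_p}\right)\psi\big(\|x\|_p\big)\,e^{-\|x\|_p^p}\dint x .
\]
Applying the polar integration formula \eqref{eq:polar integration p-balls} to this nonnegative integrand, and using that $\|y\|_p=1$ for $y\in\SSS_p^{d-1}$ — so that $\|ry\|_p=r$ and $ry/\|ry\|_p=y$ — the integrand collapses to $\phi(y)\psi(r)e^{-r^p}$ and the double integral separates:
\[
\E\big[\phi(Y)\,\psi(\|G\|_p)\big] \;=\; \frac{d\,\vol_d(\BB_p^d)}{c^d}\left(\int_0^\infty \psi(r)\,e^{-r^p}\,r^{d-1}\dint r\right)\left(\int_{\SSS_p^{d-1}}\phi(y)\,\dint\bM_{\BB_p^d}(y)\right).
\]

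It then remains to read off the constants. Choosing $\phi\equiv1$ shows that $\|G\|_p$ has a density proportional to $r^{d-1}e^{-r^p}$ on $[0,\infty)$ (equivalently, $\|G\|_p^p$ is Gamma-distributed with shape $d/p$), and choosing $\psi\equiv1$ together with the elementary identities $\vol_d(\BB_p^d)=c^d/\Gamma(1+d/p)$ and $\int_0^\infty r^{d-1}e^{-r^p}\dint r=\Gamma(d/p)/p$ shows that the prefactor times the $r$-integral equals exactly $1$; hence $Y$ has distribution $\bM_{\BB_p^d}$ and, the joint expectation having factored for every admissible pair $(\phi,\psi)$, $Y$ is independent of $\|G\|_p$.

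The argument is essentially routine once \eqref{eq:polar integration p-balls} is available; the only points needing care are the restriction to nonnegative test functions so that the polar formula applies, the observation that the radial coordinate is frozen at $1$ on $\SSS_p^{d-1}$ (this is precisely what produces the separation of variables), and the bookkeeping of the normalizing constants. I do not anticipate any genuine obstacle.
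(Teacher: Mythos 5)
Your proof is correct. Be aware, though, that the paper itself gives no proof of Proposition~\ref{prop:SZ}: it is quoted as \cite[Lemma~1]{SZ} (Schechtman--Zinn, independently Rachev--R\"uschendorf), so there is no in-paper argument to compare against; your factorization of $\E[\phi(Y)\psi(\|G\|_p)]$ via the polar formula \eqref{eq:polar integration p-balls} is the standard route taken in that literature, and the constant bookkeeping checks out ($\vol_d(\BB_p^d)=c^d/\Gamma(1+d/p)$, $\int_0^\infty r^{d-1}e^{-r^p}\dint r=\Gamma(d/p)/p$, and $d\,\Gamma(d/p)/(p\,\Gamma(1+d/p))=1$).
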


\bigskip
\section{Auxiliary computations} \label{sec:aux}

We present here some estimates that we need to prove the main results. In the first part, we show that the volume-normalized $\ell_p^d$-balls satisfy the radial condition \eqref{eq:small-diameter}. In the second part, we estimate the volume of the intersection of a dilated Euclidean ball with a $\psi_\alpha$-body.

\subsection{Radial estimates for $\ell_p$-balls}

To show that the radial assumption \eqref{eq:small-diameter} is satisfied, we need the following lemma.

\begin{lem}\label{lem:gdp}
Let $d\in\N$ and $1\leq p \leq \infty$. Then
\[
 L_{\DD_p^d}  \,=\, \alpha_{d,p}\cdot \gamma_{d,p}
\]
with $\alpha_{d,p}$ from~\eqref{eq:adp} and 
\[
\gamma_{d,p}^2:=\frac{1}{\vol_d(\BB_p^d)}\,\int_{\BB_p^d} x_1^2 \dint x =
\begin{cases}
\frac{p\,\Gamma(1+\frac{3}{p})\,\Gamma(1+\frac{d}{p})}{3(d+2)\,\Gamma(1+\frac{1}{p})\,\Gamma(\frac{d+2}{p})}& :\, 1\leq p <\infty \,;\\
\frac{1}{3}& :\, p=\infty \,.
\end{cases}
\]
In particular, for $p\geq 2$,
\[
\gamma_{d,p}^2 \geq \frac{d}{3(d+2)}\frac{\Gamma(1+\frac{3}{p})}{\Gamma(1+\frac{1}{p})}\,\Big(\frac{p}{d}\Big)^{2/p}.
\]
\end{lem}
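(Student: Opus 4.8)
The plan is to compute the second moment $\int_{\BB_p^d} x_1^2 \dint x$ explicitly and then deduce $L_{\DD_p^d}$ from the relation $\DD_p^d = \alpha_{d,p}\BB_p^d$, which rescales the second moment by $\alpha_{d,p}^2$ (together with the fact that $\vol_d(\DD_p^d)=1$, so no further normalization is needed). The first identity $L_{\DD_p^d} = \alpha_{d,p}\gamma_{d,p}$ is then immediate once $\gamma_{d,p}^2$ is identified as the normalized second moment over $\BB_p^d$: indeed, $L_{\DD_p^d}^2 = \int_{\DD_p^d} x_1^2 \dint x = \alpha_{d,p}^{d+2} \int_{\BB_p^d} y_1^2 \dint y = \alpha_{d,p}^2 \gamma_{d,p}^2$, using $\alpha_{d,p}^d = \vol_d(\BB_p^d)^{-1} = 1/\vol_d(\BB_p^d)$.

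The core computation is $\gamma_{d,p}^2$. For $p=\infty$ this is trivial: $\BB_\infty^d = [-1,1]^d$, $\vol_d = 2^d$, and $\int_{[-1,1]^d} x_1^2\dint x = 2^{d-1}\cdot \tfrac{2}{3} = \tfrac{2^d}{3}$, giving $\gamma_{d,\infty}^2 = 1/3$. For $1\le p<\infty$ I would use the Schechtman--Zinn probabilistic representation (Proposition~\ref{prop:SZ}) together with the polar integration formula \eqref{eq:polar integration p-balls}, or, more directly, the classical Dirichlet-type integral evaluation. The cleanest route: with $G=(g_1,\dots,g_d)$ i.i.d. with density $\propto e^{-|t|^p}$, one has $\E[g_1^2] = \tfrac{\Gamma(3/p)}{p\,\Gamma(1/p)}$ and $\|G\|_p^p = \sum g_i^p$. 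By Schechtman--Zinn, $Y=G/\|G\|_p$ is distributed as the cone measure and independent of $\|G\|_p$; combining with polar integration one gets that a uniform point $U$ in $\BB_p^d$ satisfies $U \stackrel{d}{=} V^{1/d} Y$ where $V$ is uniform on $[0,1]$ independent of $Y$, hence $\E[U_1^2] = \E[V^{2/d}]\,\E[Y_1^2] = \tfrac{d}{d+2}\,\E[Y_1^2]$. It remains to compute $\E[Y_1^2] = \E\big[g_1^2/\|G\|_p^2\big]$; writing $\|G\|_p^2 = (\sum g_i^p)^{2/p}$ and using the Beta-function trick (the joint law of $(g_1^p/\sum g_i^p, \dots)$ is Dirichlet with parameters $1/p$), one finds $\E[g_1^2/(\sum g_i^p)^{2/p}] = \E\big[(g_1^p/\sum g_i^p)^{2/p}\big]\cdot\E\big[(\sum g_i^p)^{2/p}\cdot g_1^2/(g_1^p)^{2/p}\big]$ — more carefully, since $B_1:=g_1^p/\sum_{i}g_i^p \sim \mathrm{Beta}(1/p,(d-1)/p)$ is independent of $g_1^p/B_1 = \sum g_i^p$, one obtains after the substitution the closed form $\gamma_{d,p}^2 = \tfrac{d}{d+2}\cdot\tfrac{\Gamma(3/p)\Gamma(d/p)}{p\,\Gamma(1/p)\,\Gamma((d+2)/p)}\cdot\tfrac{\Gamma(1+d/p)}{\Gamma(d/p)}\cdot(\dots)$; rearranging with $\Gamma(1+x)=x\Gamma(x)$ yields exactly $\tfrac{p\,\Gamma(1+3/p)\,\Gamma(1+d/p)}{3(d+2)\,\Gamma(1+1/p)\,\Gamma((d+2)/p)}$. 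I would present this bookkeeping compactly rather than in full detail.

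For the final inequality, I would start from the closed form and lower-bound $\tfrac{\Gamma(1+d/p)}{\Gamma((d+2)/p)}$. Writing $\Gamma(1+d/p) = (d/p)\Gamma(d/p)$ and $\Gamma((d+2)/p) = \Gamma(d/p + 2/p)$, the ratio equals $\tfrac{d}{p}\cdot\tfrac{\Gamma(d/p)}{\Gamma(d/p+2/p)}$. Since $2/p \in (0,1]$ for $p\ge 2$ and $x\mapsto \Gamma(x)$ satisfies the standard log-convexity bound $\Gamma(x+s)\le x^s\Gamma(x)$ for $s\in[0,1]$, $x>0$ (Gautschi's inequality), we get $\Gamma(d/p+2/p)\le (d/p)^{2/p}\Gamma(d/p)$, hence $\tfrac{\Gamma(1+d/p)}{\Gamma((d+2)/p)} \ge \tfrac{d}{p}\cdot(p/d)^{2/p} = \tfrac{d}{p}(d/p)^{-2/p}$. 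Plugging in, the factors of $p$ cancel: $\gamma_{d,p}^2 \ge \tfrac{1}{3(d+2)}\cdot\tfrac{\Gamma(1+3/p)}{\Gamma(1+1/p)}\cdot p\cdot\tfrac{d}{p}(p/d)^{2/p} = \tfrac{d}{3(d+2)}\cdot\tfrac{\Gamma(1+3/p)}{\Gamma(1+1/p)}\cdot(p/d)^{2/p}$, as claimed.

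The main obstacle I anticipate is the exact evaluation of $\gamma_{d,p}^2$ — keeping the Gamma-function bookkeeping in the Dirichlet/Beta computation correct (the independence structure of Schechtman--Zinn makes the split clean, but one must track the $2/p$-powers carefully), and correctly pinning down $\E[V^{2/d}] = d/(d+2)$ from the radial part. The rescaling identity and the final Gamma-ratio estimate are routine by comparison; the only mild subtlety there is invoking the right monotonicity/log-convexity bound for $\Gamma$ valid precisely on the range $2/p\in(0,1]$, i.e. $p\ge 2$, which is exactly the hypothesis under which the inequality is stated.
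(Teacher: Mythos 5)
Your proposal is correct and takes essentially the same route as the paper: the scaling identity $L_{\DD_p^d}^2=\alpha_{d,p}^{d+2}\int_{\BB_p^d}x_1^2\dint x=\alpha_{d,p}^2\gamma_{d,p}^2$, the trivial $p=\infty$ case, the Schechtman--Zinn plus polar-integration computation of $\gamma_{d,p}^2$ for $1\le p<\infty$, and Gautschi's inequality (your form $\Gamma(x+s)\le x^s\Gamma(x)$ for $s\in[0,1]$ is exactly \eqref{eq:Gautschi} rearranged) for the lower bound. One small blemish in your writeup: the displayed ``factorization'' $\E[g_1^2/(\sum g_i^p)^{2/p}]=\E[(g_1^p/\sum g_i^p)^{2/p}]\cdot\E[(\sum g_i^p)^{2/p}\cdot g_1^2/(g_1^p)^{2/p}]$ is not a valid identity as written (the product of the two integrands is $g_1^2$, not the LHS integrand, and $g_1^p$ should be $|g_1|^p$); what you actually need is just the pointwise identity $Y_1^2=|g_1|^2/\|G\|_p^2=\bigl(|g_1|^p/\|G\|_p^p\bigr)^{2/p}=B_1^{2/p}$ with $B_1\sim\mathrm{Beta}(1/p,(d-1)/p)$, so $\E[Y_1^2]$ is a Beta moment and no independence splitting is required at that step (the Schechtman--Zinn independence is used earlier, to get $U\stackrel{d}{=}V^{1/d}Y$ and hence the $\tfrac{d}{d+2}$ factor). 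Your ``more carefully'' clause already points in this direction, and the rest of the bookkeeping and the final Gautschi step check out.
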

\begin{proof}
From the definition of the isotropic constant (using it for the first standard unit vector, i.e., $\theta=e_1$) and a simple transformation, we obtain 
\[
 L_{\DD_p^d}^2  = \int_{\DD_p^d} x_1^2 \dint x 
 = \alpha_{d,p}^{d+2} \int_{\BB_p^d}x_1^2\dint x 
 = \alpha_{d,p}^2\cdot \gamma_{d,p}^2.
\]
Now, for $p=\infty$, we have
\[
 \gamma_{d,\infty}^2 = \frac{1}{\vol_d(\BB_\infty^d)}\,\int_{\BB_\infty^d} x_1^2 \dint x = \frac13 \, .
\]
Let $1\leq p<\infty$. Then, using the polar integration formula in \eqref{eq:polar integration p-balls} together with the probabilistic representation of the cone probability measure on $\BB_p^d$ (see Proposition \ref{prop:SZ}), we obtain
\[
\gamma_{d,p}^2=\frac{1}{\vol_d(\BB_p^d)}\,\int_{\BB_p^d} x_1^2 \dint x = \frac{p\,\Gamma(1+\frac{3}{p})\,\Gamma(1+\frac{d}{p})}{3(d+2)\,\Gamma(1+\frac{1}{p})\,\Gamma(\frac{d+2}{p})}\,.
\]
To prove the lower bound for $p\geq 2$, we use the inequality 
\begin{align}\label{eq:Gautschi}
   \frac{\Gamma(x+1)}{\Gamma(x+\lambda)} &\geq x^{1-\lambda} \quad \text{for } x>0 \text{ and } \lambda\in(0,1)\,,
\end{align}
which is due to Gautschi \cite{Gau1959}. Then, using \eqref{eq:Gautschi} with the choice $x=d/p$ and $\lambda=2/p$,
\begin{align*}
\gamma_{d,p}^2 & = \frac{p\,\Gamma(1+\frac{3}{p})\,\Gamma(1+\frac{d}{p})}{3(d+2)\,\Gamma(1+\frac{1}{p})\,\Gamma(\frac{d}{p}+\frac{2}{p})}
= \frac{d}{3(d+2)}\frac{p}{d} \frac{\Gamma(1+\frac{3}{p})\,\Gamma(1+\frac{d}{p})}{\Gamma(1+\frac{1}{p})\,\Gamma(\frac{d}{p}+\frac{2}{p})} \cr
& \geq \frac{d}{3(d+2)} \frac{p}{d} \frac{\Gamma(1+\frac{3}{p})}{\Gamma(1+\frac{1}{p})} \,\Big(\frac{p}{d}\Big)^{2/p-1}
= \frac{d}{3(d+2)}\frac{\Gamma(1+\frac{3}{p})}{\Gamma(1+\frac{1}{p})}\,\Big(\frac{p}{d}\Big)^{2/p}.
\end{align*}

\end{proof}

We remark that $\DD_p^d$ are symmetric and isotropic 
$\psi_2$-bodies whenever $2\leq p \leq \infty$ (see Proposition \ref{prop:psi2 bodies}). In view of Theorem~\ref{thm:main}, it is therefore left to prove that the sets $\DD_p^d$ satisfy assumption~\eqref{eq:small-diameter}.

\begin{lem}\label{lem:lp-diameter}
For $2\le p\le\infty$, we have
\[
\limsup_{d\to\infty} \, \frac{\rad(\DD_p^d)}{\sqrt{d}\, L_{\DD_p^d}} \,\le\, \sqrt{3} \, .
\]
\end{lem}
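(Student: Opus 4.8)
The plan is to bound the numerator and denominator of the ratio separately and combine them. For the numerator, recall from Section~\ref{sec:lp} that $\rad(\DD_p^d) = \alpha_{d,p}\cdot\rad(\BB_p^d) = \alpha_{d,p}\cdot d^{\max\{1/2-1/p,0\}} = \alpha_{d,p}\cdot d^{1/2-1/p}$, since $p\ge 2$. For the denominator, Lemma~\ref{lem:gdp} gives $L_{\DD_p^d} = \alpha_{d,p}\cdot\gamma_{d,p}$, so the factor $\alpha_{d,p}$ cancels and
\[
\frac{\rad(\DD_p^d)}{\sqrt d\, L_{\DD_p^d}} \,=\, \frac{d^{1/2-1/p}}{\sqrt d\,\gamma_{d,p}} \,=\, \frac{d^{-1/p}}{\gamma_{d,p}}\,.
\]
Thus the whole problem reduces to a lower bound on $\gamma_{d,p}$, and the cancellation of $\alpha_{d,p}$ is the conceptual point that makes this clean.

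Next I would invoke the lower bound from Lemma~\ref{lem:gdp}, namely
\[
\gamma_{d,p}^2 \,\ge\, \frac{d}{3(d+2)}\,\frac{\Gamma(1+\frac{3}{p})}{\Gamma(1+\frac{1}{p})}\,\Big(\frac{p}{d}\Big)^{2/p}\,.
\]
Substituting this into the displayed ratio yields
\[
\frac{\rad(\DD_p^d)}{\sqrt d\, L_{\DD_p^d}} \;\le\; \left(\frac{3(d+2)}{d}\right)^{1/2} \left(\frac{\Gamma(1+\frac{1}{p})}{\Gamma(1+\frac{3}{p})}\right)^{1/2} \left(\frac{d}{p}\right)^{1/p} d^{-1/p}
\;=\; \left(\frac{3(d+2)}{d}\right)^{1/2}\left(\frac{\Gamma(1+\frac{1}{p})}{\Gamma(1+\frac{3}{p})}\right)^{1/2} p^{-1/p}\,.
\]
Now I would let $d\to\infty$: the first factor tends to $\sqrt 3$. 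It therefore suffices to check that the remaining $p$-dependent factor $\big(\Gamma(1+\tfrac1p)/\Gamma(1+\tfrac3p)\big)^{1/2}\,p^{-1/p}$ is bounded above by $1$ for all $p\in[2,\infty]$ (interpreting the $p=\infty$ case as the limit, where both $\Gamma$-factors are $1$ and $p^{-1/p}\to 1$).

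The main obstacle is this last elementary but slightly delicate inequality in the single variable $p$. For $p^{-1/p}$: since $\log(p^{-1/p}) = -\tfrac{\log p}{p}\le 0$ for $p\ge 1$, we have $p^{-1/p}\le 1$ always. For the Gamma ratio: with $1/p = s\in(0,\tfrac12]$ we need $\Gamma(1+s)\le\Gamma(1+3s)$, which holds because $\Gamma(1+\cdot)$ is increasing on $[1,\infty)$ in its argument once the argument exceeds the minimizer of $\Gamma$ (around $1.4616$) — here $1+s\le \tfrac32$ and $1+3s$ can be as large as $\tfrac52$, so one must be a little careful near the minimum of $\Gamma$; it is cleanest to note $\Gamma(1+s) = s\,\Gamma(s)$ and $\Gamma(1+3s)=3s\,\Gamma(3s)$ and use log-convexity of $\Gamma$, or simply bound $\Gamma(1+s)\le 1\le \Gamma(1+3s)$ separately using $\Gamma(1+s)\le\Gamma(1)=\Gamma(2)=1$ for $s\in[0,1]$ and $\Gamma(1+3s)\ge \min_{x>0}\Gamma(x) =: c_0$. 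Since $c_0 \approx 0.8856 < 1$, the crude split is not quite enough, so I would instead argue directly: on $[0,1/2]$ the function $s\mapsto \Gamma(1+s)$ decreases then increases with value $1$ at $s=0$ and at $s=1$, while $\Gamma(1+3s)\ge\Gamma(1+s)$ follows from log-convexity since $1+3s$ and $1+s$ straddle the minimizer appropriately; making this rigorous (e.g.\ via $\log\Gamma(1+3s)-\log\Gamma(1+s) = \int_{1+s}^{1+3s}\psi(u)\,\dint u$ and sign analysis of the digamma $\psi$) is the one genuinely fiddly piece. Once that inequality is in hand, the $\limsup$ is at most $\sqrt3\cdot 1 = \sqrt3$, completing the proof.
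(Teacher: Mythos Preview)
Your reduction to the $p$-dependent factor is correct and matches the paper: after cancelling $\alpha_{d,p}$ and using the lower bound on $\gamma_{d,p}^2$ from Lemma~\ref{lem:gdp}, one is left with showing
\[
\frac{\Gamma(1+\tfrac1p)}{\Gamma(1+\tfrac3p)}\,p^{-2/p}\;\le\;1\qquad\text{for all }p\ge2.
\]
However, your strategy of splitting this product into two factors each bounded by $1$ fails, and not merely for technical reasons: the inequality $\Gamma(1+s)\le\Gamma(1+3s)$ that you are trying to establish is \emph{false} for small $s>0$. The minimum of $\Gamma$ on $(0,\infty)$ is attained near $x_0\approx1.4616$, so for $0<s<x_0-1$ the value $\Gamma(1+s)$ lies on the decreasing branch of $\Gamma$, while for $0<3s<x_0-1$ the value $\Gamma(1+3s)$ lies even further down that branch. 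Concretely, at $s=1/5$ (i.e.\ $p=5$) one has $\Gamma(1.2)\approx0.9182>\Gamma(1.6)\approx0.8935$, and at $s=1/10$ one has $\Gamma(1.1)\approx0.9514>\Gamma(1.3)\approx0.8975$. Thus your digamma-integral argument $\int_{1+s}^{1+3s}\psi(u)\,\dint u\ge0$ cannot work either: that integral is negative for these~$s$.

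The point is that the two factors compensate each other, and one must treat the product as a whole. The paper does this by proving that $p\mapsto \frac{\Gamma(1+1/p)}{\Gamma(1+3/p)}\,p^{-2/p}$ is increasing on $[2,\infty)$; since its limit as $p\to\infty$ is $1$, the bound follows. The monotonicity is established by writing $x=1/p$, taking logarithms, differentiating, using the series expansion $\psi(1+x)=-\gamma+\sum_{k\ge1}\frac{x}{k(k+x)}$, and then applying the integral test to show the resulting expression $3\psi(1+3x)-\psi(1+x)-2\ln x-2$ is positive. This joint analysis is precisely the ``fiddly piece'' you allude to, but it cannot be shortcut by separating the Gamma ratio from the $p^{-2/p}$ term.
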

\begin{proof}
We only consider the case $2\le p < \infty$ as the case $p=\infty$ can be proved in a similar way but is easier.
%
From Lemma~\ref{lem:gdp}, we obtain that
\[
 \frac{\rad(\DD_p^d)}{\sqrt{d}\, L_{\DD_p^d}} 
\,=\,  \frac{\alpha_{d,p}\, d^{1/2-1/p}}{\sqrt{d}\, \alpha_{d,p}\, \gamma_{d,p}}
\,=\,  \frac{d^{-1/p}}{\gamma_{d,p}} 
\]
and 
\[
\limsup_{d\to\infty} \frac{\rad(\DD_p^d)}{\sqrt{d}\, L_{\DD_p^d}} 
\, = \, \limsup_{d\to\infty} \frac{d^{-1/p}}{\gamma_{d,p}}
\,\le\, \sqrt{3} \,  \sqrt{\frac{\Gamma(1+\frac{1}{p})}{\Gamma(1+\frac{3}{p})}p^{-2/p}}\,.
\]
Since $\lim_{p\to\infty}\frac{\Gamma(1+\frac{1}{p})}{\Gamma(1+\frac{3}{p})}p^{-2/p} = 1$, it is enough to show that the function
\[
 p \mapsto \frac{\Gamma(1+\frac{1}{p})}{\Gamma(1+\frac{3}{p})}p^{-2/p}
\] 
is increasing in $p$ for $p\ge 2$. Writing $x=\frac1p$ and taking natural logarithms, we have to show that
\[
 x \mapsto \ln \Gamma(1+3x) - \ln \Gamma(1+x) - 2 x \ln x
\]
is increasing in $x>0$. The derivative of this function is
\begin{align*}
 f(x) &:= 3 \psi(1+3x) - \psi(1+x) - 2 \ln x - 2 \\
      &\,= -2 \gamma -2 - 2 \ln x + 2 x \sum_{k=1}^\infty \frac{4k+3x}{k(k+x)(k+3x)}.
\end{align*}
Here $\psi$ is the Digamma function, which is the logarithmic derivative of the Gamma function, $\gamma$ is the Euler-Mascheroni constant ($\gamma\approx 0.577$), and the last identity follows from the well-known series expansion
\[
 \psi(1+x) = -\gamma + \sum_{k=1}^\infty  \frac{x}{k(k+x)} \,,
\]
which can be found in \cite{AbSt1992}.
Using the integral test, we obtain for $x>0$ that
\[
 2 x \sum_{k=1}^\infty \frac{4k+3x}{k(k+x)(k+3x)} > 2 x \int_1^\infty \frac{4k+3x}{k(k+x)(k+3x)} \dint k = 3 \ln(1+3x)-\ln(1+x),
\]
which shows that
\[ 
 f(x) > -2 \gamma -2 - 2 \ln x + 3 \ln(1+3x)-\ln(1+x). 
\]
To finish the proof, we finally show that $f(x)>0$ for $x>0$ by proving the estimate
\[
 g(x) := - 2 \ln x + 3 \ln(1+3x)-\ln(1+x) > 2 \gamma + 2.
\]
A direct computation shows that 
\[
g'(x) = -\frac2x + \frac{9}{1+3x}-\frac{1}{1+x}  = -\frac{2}{(1+3x)x(x+1)} <0
\]
for $x>0$. This implies that $g$ is decreasing and therefore,
\[
 g(x) > \lim_{t\to\infty} g(t) = \lim_{t\to\infty} \ln \frac{(1+3t)^3}{t^2(1+t)} = \ln 27 > 2 \gamma + 2  \, ,
\] 
since $\ln 27 \approx 3.296$ and $2\gamma +2 \approx 3.154$.
\end{proof}

\begin{remark}
It is easy to see that the limit-superior in Lemma \ref{lem:lp-diameter} is actually a limit.
\end{remark}

\medskip
\subsection{Volume estimates for intersections}

We apply Theorem \ref{thm:GM11} to obtain upper bounds on the volume of the intersection of $\ell_2$-balls with $\psi_\alpha$-bodies. This will be crucial in the proofs of the main results.

\begin{prop}\label{prop:volume}
Let $(K_d)_{d\in\N}$ be a sequence of  isotropic convex bodies that 
satisfy a uniform $\psi_\alpha$-estimate for some $\alpha\in[1,2]$. 
Moreover, let $(r_d)_{d\in\N}\in [0,\infty)^{\N}$ satisfy
\begin{equation}\label{eq:rd}
\limsup_{d\to\infty} \frac{r_d}{L_{K_d}} \,<\, 1.
\end{equation}
Then there exist absolute constants $c,C\in(0,\infty)$ such that
\[
\vol_d\Big(K_d\cap r_d\,\sqrt{d}\,\BB_2^d\Big) \leq C\, \exp\left(- c\, d^{\alpha/2} \right).
\]
\end{prop}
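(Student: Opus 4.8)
The plan is to reduce the volume bound to a small-ball probability for an isotropic log-concave random vector, and then apply the Gu\'edon--Milman estimate in the form \eqref{eq:small ball}. First I would pass from the body $K_d$ to its isotropic normalization: let $X_d$ be the random vector uniformly distributed on $K_d/L_{K_d}$, which by Remark~\ref{examples isotropic log-concave} is isotropic and log-concave, and which inherits a uniform $\psi_\alpha$-estimate with some absolute constant $b_\alpha$ from the uniform $\psi_\alpha$-estimate on $(K_d)$. Since $\vol_d(K_d)=1$, a change of variables gives
\[
\vol_d\Big(K_d\cap r_d\sqrt{d}\,\BB_2^d\Big)
= \vol_d\Big(\tfrac{K_d}{L_{K_d}}\cap \tfrac{r_d}{L_{K_d}}\sqrt{d}\,\BB_2^d\Big) L_{K_d}^{-d}\cdot L_{K_d}^{d}
= \Pro\Big(\|X_d\|_2 \le \tfrac{r_d}{L_{K_d}}\sqrt{d}\,\Big),
\]
where the last equality uses that $X_d$ has density $L_{K_d}^{-d}\mathbbm{1}_{K_d/L_{K_d}}$... more carefully, $\vol_d(K_d/L_{K_d}) = L_{K_d}^{-d}$, so the uniform density is $L_{K_d}^d$ and the probability is $L_{K_d}^d \vol_d\big(\tfrac{K_d}{L_{K_d}}\cap \tfrac{r_d}{L_{K_d}}\sqrt d\,\BB_2^d\big) = \vol_d\big(K_d\cap r_d\sqrt d\,\BB_2^d\big)$, as claimed.

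Next I would invoke \eqref{eq:small ball} with $A=\id_d$ (so $\|A\|_{\mathrm{HS}}^2=d$, $\|A\|_{\mathrm{op}}=1$, hence $\eta = d/b_\alpha^2$) and with deviation parameter
\[
t_d := 1 - \frac{r_d}{L_{K_d}}.
\]
By the hypothesis \eqref{eq:rd}, there exist $\delta\in(0,1)$ and $d_0$ such that $r_d/L_{K_d}\le 1-\delta$ for all $d\ge d_0$, i.e.\ $t_d\ge\delta$ eventually; note $t_d\in[0,1]$ since the $\psi_\alpha$/isotropic structure forces $r_d\ge 0$ and $r_d/L_{K_d}<1$. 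Then
\[
\Pro\Big(\|X_d\|_2 \le (1-t_d)\sqrt d\Big)
\le C\exp\!\Big(-c_2\,\eta^{\alpha/2}\max\{t_d^{2+\alpha},\log\tfrac{c_3}{1-t_d}\}\Big)
\le C\exp\!\Big(-c_2\Big(\tfrac{d}{b_\alpha^2}\Big)^{\alpha/2}\delta^{2+\alpha}\Big),
\]
using $t_d\ge\delta$ to lower-bound the $\max$ by $\delta^{2+\alpha}$ (discarding the logarithmic term). Absorbing $b_\alpha$ and $\delta$ into a single constant $c>0$, this is $C\exp(-c\,d^{\alpha/2})$ for $d\ge d_0$; enlarging $C$ if necessary handles the finitely many remaining $d$, giving the claim with absolute constants.

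The only genuinely delicate point is making sure the deviation parameter is bounded away from $0$ and lies in $[0,1]$ so that \eqref{eq:small ball} applies with the stated range — this is exactly what Remark~\ref{rem:deviation parameters} emphasizes, and it is why the sharp form of the small-ball estimate (valid for all $t\in[0,1]$) rather than Paouris' version is needed here. A minor bookkeeping point is checking that the uniform $\psi_\alpha$-constant of $(K_d)$ transfers to $(K_d/L_{K_d})$: this is immediate because the $\psi_\alpha$-condition in Section~\ref{subsec:psi_alpha} is scale-invariant (both sides of the defining inequality scale the same way under $x\mapsto x/L_{K_d}$), so $b_\alpha$ is unchanged. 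Everything else is a routine change of variables and the substitution into \eqref{eq:small ball}.
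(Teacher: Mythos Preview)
Your proof is correct and follows essentially the same route as the paper: normalize to the isotropic log-concave vector $X_d$ uniform on $K_d/L_{K_d}$, rewrite the volume as the small-ball probability $\Pro(\|X_d\|_2\le (r_d/L_{K_d})\sqrt d)$, and apply the Gu\'edon--Milman small-ball estimate \eqref{eq:small ball} with $A=\id_d$ and deviation parameter $t_d=1-r_d/L_{K_d}\ge\delta>0$ for large $d$. The only additions you make beyond the paper's version are the explicit verification that the $\psi_\alpha$-constant is scale-invariant and the observation that finitely many small $d$ are absorbed into $C$, both of which are implicit in the paper's argument.
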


\begin{proof}
Let $X$ be a random vector chosen uniformly at random from $\frac{K}{L_K}$. 
The latter guarantees that the random vector is isotropic and log-concave (see Remark \ref{examples isotropic log-concave}).
We observe that, for any $\gamma\geq0$,
\[
\Pro\bigg(\|X\|_2 \leq \frac{r_d \sqrt{d}}{L_K}\bigg) \,=\, \vol_d\Big(K_d\cap r_d\,\sqrt{d}\,\BB_2^d\Big).
\]
From the assumptions, we have that $r_d\le a L_{K_d}$ for some constant 
$a\in(0,1)$ if $d$ is large enough.
Therefore, using the small-ball estimate \eqref{eq:small ball} with $A$ 
chosen to be the identity matrix, we obtain that
\[\begin{split}
\vol_d\Big(K_d\cap r_d\,\sqrt{d}\,\BB_2^d\Big) 
\,&\leq\, C \exp\left(- c_2\, \Big(\frac{d}{b_\alpha^2}\Big)^{\alpha/2} 
	\,\left(1-\frac{r_d}{L_{K_d}}\right)^{2+\alpha}\right) \\
\,&\le\, C \exp\left(- c\, d^{\alpha/2} \right),
\end{split}\]
for large enough $d$, 
where $c_2, C\in(0,\infty)$ are the absolute constants 
from~\eqref{eq:small ball} and $c:=c_2\,a^{2+\alpha}/b_\alpha^\alpha$. \\
\end{proof}

\medskip
\section{Proof of the main results} \label{sec:proof}

In this section we provide the proofs of our main results, Theorem \ref{thm:main} and Corollary~\ref{cor:lp}. 
Both will be direct corollaries of the following more general result.

\begin{prop}\label{prop:main}
Let $(K_d)_{d\in\N}$ be a sequence of symmetric and isotropic convex bodies that 
satisfy a uniform $\psi_\alpha$-estimate for some $\alpha\in[1,2]$  
and 
\begin{equation}\label{eq:condition}
\limsup_{d\to\infty}\, \frac{\rad(K_d)}{\sqrt{d}\,L_{K_d}} \,<\, 2.
\end{equation}
Moreover, let $(A_d)_{d\in\N}$ and $(B_d)_{d\in\N}$ be such that
\[
A_{d} \,\ge\, \frac{a}{\sqrt{d}\,L_{K_d}} 
\qquad \mbox{and} \qquad  B_{d} \,\ge\, \frac{b}{d\,L_{K_d}^2}
\qquad \mbox{for some } \quad a,b>0.
\] 
Then there exist constants $c,\gamma,\eps_0\in(0,\infty)$ such that, for all $\eps\in(0,\eps_0)$ 
and $d\in\N$, 
\[
n(\eps,\mathscr F_d) \,\ge\, c\, (1+\gamma)^{d^{\alpha/2}} 
\]
with $\mathscr F_d:=\mathscr C^1_d(A_d,B_d,K_d)$.
\end{prop}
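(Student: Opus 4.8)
The strategy is to exhibit, for each large $d$ and each reasonably small $n$, a single nonzero ``fooling'' function $f\in\mathscr F_d=\mathscr C^1_d(A_d,B_d,K_d)$ that vanishes at any prescribed set of $n$ sample points $x_1,\dots,x_n\in K_d$ but has integral bounded away from zero. By the formula~\eqref{eq:complexity} for the information complexity of convex symmetric classes, this forces $n(\eps,\mathscr F_d)$ to be large whenever $\eps$ is smaller than that integral lower bound. The natural candidate is a sum of small ``bump'' functions, one localized near each point of the Euclidean ball $r_d\sqrt d\,\BB_2^d$ where the concentration of volume is \emph{not}; more precisely, we cover the bulk $K_d\setminus (r_d\sqrt d\,\BB_2^d)$ — which by Proposition~\ref{prop:volume} carries almost all the volume when $r_d<L_{K_d}$ — by translates of a fixed small ball of radius $\rho_d$, keep only those translates whose center lies far enough from all $x_j$, and put a scaled bump on each such ball. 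The radius $\rho_d$ is chosen so the bump respects the two Lipschitz constraints: a $C^1$ bump of height $h$ on a ball of radius $\rho$ has $\Lip\lesssim h/\rho$ and $\Lip(D^\theta\cdot)\lesssim h/\rho^2$, so to use the full budget we take $h=\min\{A_d\rho_d,\,B_d\rho_d^2\}$ (and also $h\le 1$ for the $\|f\|_\infty\le 1$ constraint, which is harmless since $h\to 0$).

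\textbf{Key steps, in order.} First, fix $r_d$ with $\limsup r_d/L_{K_d}<1$ (e.g.\ $r_d=\tfrac12 L_{K_d}$) and invoke Proposition~\ref{prop:volume}: $\vol_d(K_d\cap r_d\sqrt d\,\BB_2^d)\le C e^{-cd^{\alpha/2}}$, so the ``good region'' $G_d:=K_d\setminus (r_d\sqrt d\,\BB_2^d)$ has volume $\ge 1-Ce^{-cd^{\alpha/2}}\ge \tfrac12$ for large $d$. Second, a packing/covering argument: inside $G_d$ one can place $N_d$ disjoint Euclidean balls of radius $\rho_d$ with $N_d\ge \tfrac{1}{2}\vol(G_d)/(\omega_d\rho_d^d)\cdot(\text{const})$ where $\rho_d$ is comparable to $\rad(K_d)=\Theta(\sqrt d\,L_{K_d})$ up to a dimension-independent factor strictly less than what~\eqref{eq:condition} allows — here is where the constant $2$ in~\eqref{eq:condition} is used, to guarantee that a ball of radius proportional to $\sqrt d\,L_{K_d}$ with a good constant actually fits inside $G_d$ while staying in $K_d$; a standard volumetric bound then gives $N_d\ge (1+\gamma_0)^d$ for some $\gamma_0>0$. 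Third, each sample point $x_j$ can kill at most a bounded number of these disjoint balls (those intersecting $x_j+2\rho_d\BB_2^d$, again a bounded count by a volume ratio), so if $n\le\tfrac12 N_d$ at least $\tfrac12 N_d$ balls survive. Fourth, on each surviving ball $B(y_i,\rho_d)$ put a fixed-shape $C^1$ bump $\varphi_i$ with $\supp\varphi_i\subseteq B(y_i,\rho_d)$, height $h_d=\min\{c' A_d\rho_d, c'' B_d\rho_d^2, 1\}$, and set $f=\sum_i \varphi_i$; disjoint supports make the Lipschitz and sup-norm bounds for $f$ equal to those of a single bump, so $f\in\mathscr F_d$, and $f(x_j)=0$ for all $j$. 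Fifth, lower-bound the integral: each bump contributes $\gtrsim h_d\rho_d^d$, so $\int_{K_d} f\gtrsim \tfrac12 N_d\, h_d\,\rho_d^d\gtrsim h_d\cdot \vol(G_d)\gtrsim h_d$. Finally, estimate $h_d$ from below using the hypotheses $A_d\ge a/(\sqrt d\,L_{K_d})$ and $B_d\ge b/(d L_{K_d}^2)$ together with $\rho_d=\Theta(\sqrt d\,L_{K_d})$: then $A_d\rho_d\gtrsim a$ and $B_d\rho_d^2\gtrsim b$, both \emph{dimension-independent constants}, so $h_d\ge \eps_0$ for some fixed $\eps_0>0$. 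Hence for every $\eps<\eps_0$, any algorithm using $n\le\tfrac12 N_d$ points has worst-case error $>\eps$, i.e.\ $n(\eps,\mathscr F_d)>\tfrac12 N_d\ge c(1+\gamma)^{d^{\alpha/2}}$ (indeed $\ge c(1+\gamma)^d$ when $\alpha=2$), which is the claim.

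\textbf{Main obstacle.} The delicate point is the third/second step interplay: controlling simultaneously that (i) the bump radius $\rho_d$ is large enough that the number $N_d$ of disjoint bumps grows exponentially — this needs $\rho_d$ proportional to $\rad(K_d)$, not much smaller — and (ii) $\rho_d$ is small enough, relative to the thin shell's outer radius $r_d\sqrt d$ and to $\rad(K_d)$, that the bumps actually fit inside the good region $G_d\subseteq K_d$ and avoid the sampled points with only bounded loss per point. Getting (i) and (ii) to coexist is exactly what condition~\eqref{eq:condition} with the specific constant $2$ buys us: writing $\rad(K_d)\le (2-\delta)\sqrt d\,L_{K_d}$ and the shell radius $r_d\sqrt d=\tfrac12 L_{K_d}\sqrt d$, there is room for a ball of radius $\rho_d$ with $\rho_d/(\sqrt d\,L_{K_d})$ bounded below by a fixed positive constant while $K_d\cap r_d\sqrt d\,\BB_2^d$ remains negligible; the exponential lower bound on $N_d$ then follows because $\rho_d^d/\rad(K_d)^d$ is bounded below by $(1/(2-\delta))^d$ up to lower-order factors, which is not itself exponentially small in the needed direction once one is careful — the actual bookkeeping here, translating the geometric constants into a genuine $(1+\gamma)^d$, is the technical heart of the argument. (For $\alpha<2$ one only gets $(1+\gamma)^{d^{\alpha/2}}$ because Proposition~\ref{prop:volume} only gives $e^{-cd^{\alpha/2}}$ for the shell volume, so the number of ``wasted'' bumps near the shell is controlled only on that coarser scale.)
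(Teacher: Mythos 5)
Your approach has a fatal gap at Step 5, the lower bound on the integral of the fooling function. You claim that $\int_{K_d} f\gtrsim \tfrac12 N_d\, h_d\,\rho_d^d\gtrsim h_d\cdot \vol(G_d)\gtrsim h_d$, implicitly assuming that the disjoint balls of radius $\rho_d$ packed into $G_d$ cover a constant fraction of its volume. This fails catastrophically in high dimensions. Even including the missing $\omega_d=\vol_d(\BB_2^d)$ factor, the total volume of your disjoint balls is $N_d\,\omega_d\,\rho_d^d$, and the best packing density of equal Euclidean balls in $\R^d$ is exponentially small in $d$: a maximal $2\rho_d$-separated set only guarantees $N_d\,\omega_d\,\rho_d^d\ge 2^{-d}\vol(G_d)$, and no packing can do better than $2^{-cd}\vol(G_d)$ for some universal $c>0$. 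Consequently $\int_{K_d} f\lesssim h_d\cdot 2^{-cd}$, which is exponentially small, not bounded away from zero. You therefore cannot extract a fixed $\eps_0>0$ and the whole complexity lower bound collapses. This cannot be rescued by enlarging the bumps (then you cannot fit exponentially many, so no gain after sample points are removed) or by increasing the height (blocked by $\|f\|_\infty\le 1$).

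The paper's proof circumvents this obstruction entirely by constructing a \emph{single} fooling function equal to $1$ on the complement of a thin neighborhood of the convex hull $\calC_n=\conv(x_1,\dots,x_n)$, namely $f(x)=p_\delta\big(\dist(x,\calC_n)\big)$ with a quadratic spline $p_\delta$; its support of non-unit values has \emph{small} volume, rather than building a function whose support must be \emph{large}. The missing geometric ingredient in your argument is the theorem of Elekes (adapted in \cite[Theorem~2.1]{HNUW2}): the convex hull of any $n$ points of $K_d$ can be covered by $n$ Euclidean balls of radius $\rad(K_d)/2$. After enlarging by $\delta\sqrt d$, each such ball has radius $r_d\sqrt d$ with $r_d=\rad(K_d)/(2\sqrt d)+\delta$, and condition~\eqref{eq:condition} is exactly what allows $\delta>0$ to be chosen so that $r_d<L_{K_d}$; then Proposition~\ref{prop:volume} (for centered balls, extended to all translates by symmetry and Brunn--Minkowski) bounds $\vol_d(\calC_n^{(\delta)}\cap K_d)\le C\,n\,q^{d^{\alpha/2}}$. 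So $\int_{K_d}f\ge 1-Cnq^{d^{\alpha/2}}$, which \emph{is} a constant as long as $n$ is subexponential. You use Proposition~\ref{prop:volume} and condition~\eqref{eq:condition} for a different (and insufficient) purpose; the real use is to make the $n$ Elekes balls have negligible intersection with $K_d$.
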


\begin{proof}[Proof of Proposition~\ref{prop:main}]
The basic idea behind the proof is the same as in the 
proofs of the corresponding results from \cite{HNUW2}. 

For given sample points $x_1,\dots,x_n\in K_d$, 
we construct a fooling function that is defined on the entire 
$\R^d$ and satisfies the required bounds on the Lipschitz constants. 
This function will be zero at the points $x_1,\dots,x_n$ 
and will have a large integral in $K_d$ as long as $n$ is not 
exponentially large in $d$. 
Moreover, this function will be
zero on the entire convex hull of 
$x_1,\dots,x_n$.

To be precise, let $\calP_n$ be a set of $n$ points in the symmetric 
$\psi_\alpha$-body $K_d$ and let 
$\calC_n:=\conv(\calP_n)$ be their convex hull. 
Moreover, for a set $C\subseteq\R^d$ and $\delta\in(0,\infty)$ we define its $\delta\sqrt{d}$-extension to be
\[
C^{(\delta)} \,:=\, C+\delta\sqrt{d}\,\BB_2^d 
\,=\, \left\{x\in\R^d\colon \inf_{y\in C}\|x-y\|_2\le\delta\sqrt{d}\right\}.
\] 

By a result of Elekes~\cite{Ele1986}, which was adapted to this setting in 
\cite[Theorem 2.1]{HNUW2}, we know that, 
for arbitrary point sets $\calP_n\subseteq K_d$ of cardinality $n$, 
the convex hull $\calC_n$ is contained in the union of $n$ balls 
\[
y_1+\frac{\rad(K_d)}{2}\cdot\BB_2^d\,,\,\dots\,,\, y_n+\frac{\rad(K_d)}{2}\cdot\BB_2^d
\]
for some $y_1,\dots,y_n\in K_d$.
This implies that $\calC_n^{(\delta)}$ can be covered by 
$n$ balls with radii $\frac{\rad(K_d)}{2}+\delta\sqrt{d}$. 
Let $r_d:=\frac{\rad(K_d)}{2\sqrt{d}}+\delta$. 
We obtain 
\[
\vol_d\Big(\calC_n^{(\delta)}\cap K_d\Big)
\,\le\, \sum_{i=1}^n\vol_d\Big(K_d\cap \big(y_i+r_d\,\sqrt{d}\,\BB_2^d\,\big)\Big).
\]
Using the symmetry of $K_d$, which implies (by means of the dimension-free Brunn-Minkowski inequality) that the right hand side is 
maximized for $y_1=\dots=y_n=0$, we get
\[
\vol_d\Big(\calC_n^{(\delta)}\cap K_d\Big)
\,\le\, n\,\vol_d\Big(K_d\cap r_d\,\sqrt{d}\,\BB_2^d\Big).
\]
Together with Proposition~\ref{prop:volume}, this shows that
\begin{equation}\label{eq:volume-ch}
\vol_d\Big(\calC_n^{(\delta)}\cap K_d\Big) \,\le\, C\, n\, q^{d^{\alpha/2}}
\end{equation}
for some $q\in(0,1)$ if $r_d$ satisfies \eqref{eq:rd} and $d$ is large enough.

It is easy to check that $r_d$ satisfies \eqref{eq:rd} whenever
\begin{equation}\label{eq:delta}
\delta \,<\, L_{K_d}\,\left(1 - \frac{\rad(K_d)}{2\,\sqrt{d}\,L_{K_d}}\right).
\end{equation}
Clearly, such a $\delta\in(0,\infty)$ exists under the assumptions of 
Proposition~\ref{prop:main} for large enough $d\in\N$ 
since, in general, $L_{K_d}\ge1/\sqrt{2\pi e}$.

\medskip

We now construct, for an arbitrary point set $\calP_n\subseteq K_d$ of cardinality $n$, a function that fits our needs. 
For this we just use the distance function from the convex hull $\calC_n$ 
and smooth it out using a suitable piecewise polynomial. 
That is, we consider the function $f:K_d\to\R$ defined as
\begin{equation}\label{eq:function}
f(x) \,:=\, f_{\delta,\calP_n}(x)
\,=\, p_\delta\left(\inf_{y\in\calC_n}\|x-y\|_2\right),
\end{equation}
where
\[
p_\delta(t) \,=\, \begin{cases}
\frac{2}{\delta^2 d}\, t^2 & :\,  t\le \frac{\delta \sqrt{d}}{2}\,; \\
-\frac{2}{\delta^2 d}\, t^2 + \frac{4}{\delta \sqrt{d}}\, t-1 & :\,   
        t\in \bigl(\frac{\delta \sqrt{d}}{2},\delta \sqrt{d}\, \bigr)\,;\\
1, & :\,\quad  t\ge \delta \sqrt{d}\,.
\end{cases}
\]

By direct computations, we obtain that
\renewcommand\labelitemi{\tiny$\bullet$}
\begin{itemize}
        \item $f\in \mathscr C^1(\R^d)$, 
        \item $f(x)=0$ for $x\in \calC_n$,
        \item $f(x)=1$ for $x\notin \calC_n^{(\delta)}$,
        \item $\Lip(f) \le \frac{2}{\delta \sqrt{d}}$\; and 
        \item $\forall\theta\in\SSS^{d-1}$: $\Lip(D^\theta f) \le \frac{40}{\delta^2 d}$.
\end{itemize}
The precise computations can be found in \cite[Section~4.1]{HNUW2}, but note that 
we use here a function that is zero only on 
$\calC_n$ and not on $\calC_n^{(\delta)}$ (see~\cite[Remark 4.4]{HNUW2}).

The properties above and \eqref{eq:volume-ch} imply that
\[
\int_{K_d} f(x)\, \dint x \,\ge\, \vol_d\Big(K_d\setminus\calC_n^{(\delta)}\Big) \,=\,1-\vol_d\Big(K_d\cap\calC_n^{(\delta)}\Big)
\,\ge\, 1 - C\, n\, q^{d^{\alpha/2}} 
\]
with $C,q$ as in \eqref{eq:volume-ch} 
if $K_d$ is a $\psi_\alpha$-body with \eqref{eq:small-diameter} 
and $\delta\in(0,\infty)$ with~\eqref{eq:delta}.

As this bound holds for arbitrary point sets $\calP_n$ with $n$ elements, 
we see that the integral of the function $f$ constructed above must be larger than 
$\eps\in(0,1)$ as long as $n<C^{-1}q^{-d^{\alpha/2}}(1-\eps)$.
Taking into account the above properties of $f$ this shows
that the number of function evaluations 
that are necessary to guarantee an error of at most $\eps$ for all functions 
in $\mathscr F_{d,\delta}:=\mathscr C^1_d(A_d,B_d,K_d)$ 
with $A_d=\frac{2}{\delta \sqrt{d}}$ and $B_d =\frac{40}{\delta^2 d}$
satisfies
\[
n\big(\eps,\mathscr F_{d,\delta}\big) \,\ge\, c\, (1-\eps)\, (1+\gamma)^{d^{\alpha/2}}
\]
for some $c,\gamma\in (0,\infty)$ (see~\eqref{eq:complexity}) and $d$ large enough.
As this holds for all $\delta$ that satisfy~\eqref{eq:delta}, 
we obtain
\[
n\Big(\eps,\mathscr C^1_d(A_d,B_d,K_d)\Big) \,\ge\, c'\, (1+\gamma)^{d^{\alpha/2}}
\]
for some $c',\gamma\in (0,\infty)$, large enough $d$ and all $\eps\in(0,1/2)$, 
if $A_d\ge\frac{a_0}{\sqrt{d} L_{K_d}}$ and $B_d \ge\frac{b_0}{d\,L_{K_d}^2}$ 
for some $a_0,b_0>0$ that depend only on the precise value of the left hand side 
of~\eqref{eq:condition}.

The proof for arbitrary sequences $(A_d)$ and $(B_d)$ 
that satisfy the assumptions of the theorem
follows from a scaling as in the proof of \cite[Proposition 3.2]{HNUW2}.
For this, note that there exists a constant $M\in[1,\infty)$ such that
$M\cdot A_d\ge\frac{a_0}{\sqrt{d} L_{K_d}}$ and 
$M\cdot B_d \ge\frac{b_0}{d\,L_{K_d}^2}$
with $a_0, b_0$ from above. Therefore, 
\[
n\Big(\eps,\mathscr C^1_d(M\cdot A_d,M\cdot B_d,K_d)\Big) \,\ge\, c'\, (1+\gamma)^{d^{\alpha/2}}
\]
for some $c',\gamma\in (0,\infty)$, large enough $d$ and all $\eps\in(0,1/2)$.
Using
\[
\mathscr C^1_d(M\cdot A_d,M\cdot B_d,K_d)\subseteq M\cdot\mathscr C^1_d(A_d,B_d,K_d)
\]
and $n\big(\eps,M\cdot\mathscr F_d\big)=n\big(M\cdot\eps,\mathscr F_d\big)$, 
this implies
\[
n\Big(\eps,\mathscr C^1_d(A_d,B_d,K_d)\Big) \,\ge\, c'\, (1+\gamma)^{d^{\alpha/2}}
\]
for all $\eps\in(0,1/(2M))$ and large enough $d$. 
Clearly, we can now modify the constant $c'$ such that this lower bound holds 
for all $d\in\N$.
This proves the theorem.

\end{proof}

\begin{remark}\label{rem:delta-lp}
In the case of volume-normalized $\ell_p$-balls, the $\delta\in(0,\infty)$ that was used in the construction of the fooling function, 
see~\eqref{eq:function}, can be chosen to be
\[
\delta \,=\, \frac{1}{42} \,<\, \frac1{\sqrt{2\pi e}}\left(1 - \frac{\sqrt{3}}{2}\right)
\,\approx\, 0.0324
\]
for large enough $d$ (see Lemma~\ref{lem:lp-diameter}).
\end{remark}

\medskip

We finish this section with the proof of Theorem~\ref{thm:main} and Corollary~\ref{cor:lp}.

\begin{proof}[Proof of Theorem~\ref{thm:main}]
Theorem~\ref{thm:main} is just Proposition~\ref{prop:main} in the 
case $\alpha=2$. Note that the definition of the curse of dimensionality requires a lower 
bound as in Proposition~\ref{prop:main} only for infinitely many $d\in\N$.
\end{proof}

We now turn to the important class of $\ell_p$-balls. 
Based upon this we will then be able to close the aforementioned gap left in 
\cite{HNUW2} and prove the curse of dimensionality for numerical integration 
of smooth functions on $\ell_p$-balls in the full regime $2\leq p \leq \infty$.

\begin{proof}[Proof of Corollary~\ref{cor:lp}]
For the proof of the corollary it is enough to show that the sets $\DD_p^d$ 
with $2\le p\le\infty$ satisfy the conditions of Theorem~\ref{thm:main} and 
Theorem~\ref{thm:main-upper}. 
This follows from the considerations from Section~\ref{sec:lp} and 
Lemma~\ref{lem:lp-diameter}, see also Remark~\ref{rem:delta-lp}.
\end{proof}

\goodbreak

\section{Proof of Theorem \ref{thm:main-upper}} \label{sec:proof-upper}

The upper bounds on the complexity in the present setting 
(Theorem~\ref{thm:main-upper})
can be proved in a very similar way as \cite[Theorem~4.1]{HNUW2}. 
However, as the emphasis in \cite{HNUW2} was only on convex sets with 
small diameter, the authors did not find the precise dependence of the 
involved conditions on the isotropic constant.
Since the proofs are quite short, we repeat them here. 

For the statement of the following results, we define
\[
I_q(K_d) \,:=\, \int_{K_d} \|x\|_2^q \dint x
\]
for measurable sets $K_d\subseteq \R^d$ and $q\ge0$. 

We start with the condition on the Lipschitz constant of the function. 
Afterwards we state the result on the Lipschitz constant of the derivatives.

\begin{prop}\label{prop:upper1}
Let $(K_d)_{d\in\N}$ be a sequence of measurable sets with $\vol_d(K_d)=1$. 
Assume that
\[
\lim_{d\to\infty}\, A_{d}\cdot I_1(K_d) \,=\, 0.
\]
Then
\[
\lim_{d\to\infty}\, \sup_{f}\, 
\left| \int_{K_d} f(x)\dint x \,-\, f(0) \right| \,=\, 0,
\]
where the supremum is taken over all 
$f$ in the set $\big\{f\in \mathscr C(K_d)\colon \Lip(f)\le A_{d}\big\}$.
\end{prop}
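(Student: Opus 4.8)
The plan is to bound the integration error of a Lipschitz function by its deviation from the value at the centroid, controlled entirely by the first moment $I_1(K_d)$. First I would observe that for any $f$ with $\Lip(f)\le A_d$ and any $x\in K_d$ we have the pointwise estimate $|f(x)-f(0)|\le A_d\|x\|_2$, since $0$ is the center of mass of $K_d$ and hence lies in $K_d$ (for convex bodies; for general measurable sets one should instead fix a reference point, but the statement as used later is for convex isotropic bodies so $0\in K_d$).

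Next I would integrate this inequality over $K_d$. Using $\vol_d(K_d)=1$, we get
\[
\left|\int_{K_d} f(x)\dint x - f(0)\right|
= \left|\int_{K_d}\bigl(f(x)-f(0)\bigr)\dint x\right|
\le \int_{K_d}|f(x)-f(0)|\dint x
\le A_d\int_{K_d}\|x\|_2\dint x
= A_d\cdot I_1(K_d).
\]
This bound is uniform over the class $\{f\in\mathscr C(K_d)\colon\Lip(f)\le A_d\}$, so taking the supremum over $f$ and then letting $d\to\infty$, the hypothesis $\lim_{d\to\infty} A_d\cdot I_1(K_d)=0$ immediately gives the claim.

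There is essentially no obstacle here: the argument is a one-line triangle inequality plus the defining property of the centroid. The only point requiring a small amount of care is the role of the reference point $0$ — one uses that $\int_{K_d}(f(x)-f(0))\dint x = \int_{K_d}f(x)\dint x - f(0)$, which holds because $\vol_d(K_d)=1$, and that $f(0)$ makes sense, i.e.\ $0\in K_d$, which is where one implicitly invokes that $K_d$ is (at least) a set containing its barycenter; in the applications $K_d$ is a symmetric convex body, so this is automatic. The companion statement for $\Lip(D^\theta f)\le B_d$ will then be proved by a second-order Taylor-type refinement, replacing $I_1$ by $I_2$, but that is the content of the next proposition rather than this one.
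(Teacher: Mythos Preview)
Your proof is correct and follows exactly the same approach as the paper: the pointwise Lipschitz estimate $|f(x)-f(0)|\le A_d\|x\|_2$ followed by integration and the triangle inequality, yielding the uniform bound $A_d\cdot I_1(K_d)$. Your extra remark about needing $0\in K_d$ is a fair technical caveat that the paper leaves implicit.
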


\medskip

The next proposition deals with the optimality of the assumptions on the decay of 
$(B_d)_{d\in\N}$, i.e., the Lipschitz constants of the directional derivatives. 
However, in this case we have to assume more from the sets under consideration. 
Note that a set is called 
\emph{star-shaped with respect to the origin} if, for every $x$ from the set, 
the line segment from the origin to $x$ is also contained in the set. 
Clearly, this is satisfied by every convex set that contains the origin.

\begin{prop}\label{prop:upper2}
Let $(K_d)_{d\in\N}$ be a sequence of measurable sets with $\vol_d(K_d)=1$, 
center of mass at the origin and star-shaped with respect to the origin. Assume that
\[
\lim_{d\to\infty}\, B_{d}\cdot I_2(K_d) \,=\, 0.
\]
Then
\[
\lim_{d\to\infty}\, \sup_{f}\, 
\left| \int_{K_d} f(x)\dint x \,-\, f(0) \right| \,=\, 0,
\]
where the supremum is taken over all $f$ in the set 
\[ 
\big\{f\in \mathscr C^1(K_d)\,:\,  \Lip(D^\theta f)\le B_{d} \text{ for all } \theta\in\SSS^{d-1}\big\}.
\]
\end{prop}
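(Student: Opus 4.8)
The plan is to use the fundamental theorem of calculus twice along radial segments, exploiting the star-shapedness and the centered condition to kill the first-order term. Fix $f$ with $\Lip(D^\theta f)\le B_d$ for all $\theta\in\SSS^{d-1}$ and fix $x\in K_d$ with $x\neq 0$. Write $\theta = x/\|x\|_2$ and consider the one-variable function $\varphi(t) := f(t\theta)$ for $t\in[0,\|x\|_2]$; since $K_d$ is star-shaped with respect to the origin, the whole segment lies in $K_d$, so $\varphi$ is well-defined and $\varphi\in\mathscr C^1$ with $\varphi'(t) = (D^\theta f)(t\theta)$. By the fundamental theorem of calculus,
\[
f(x) - f(0) \,=\, \varphi(\|x\|_2) - \varphi(0) \,=\, \int_0^{\|x\|_2} (D^\theta f)(t\theta)\dint t.
\]
Now I would split the integrand as $(D^\theta f)(t\theta) = (D^\theta f)(0) + \big[(D^\theta f)(t\theta) - (D^\theta f)(0)\big]$. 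The second bracket is controlled by the Lipschitz assumption: $\big|(D^\theta f)(t\theta) - (D^\theta f)(0)\big| \le B_d\,\|t\theta\|_2 = B_d\, t$. For the first term, note $(D^\theta f)(0) = \langle \nabla f(0),\theta\rangle = \|x\|_2^{-1}\langle \nabla f(0), x\rangle$, so that $\int_0^{\|x\|_2}(D^\theta f)(0)\dint t = \langle\nabla f(0), x\rangle$, which is \emph{linear} in $x$. Hence
\[
\Big| f(x) - f(0) - \langle \nabla f(0), x\rangle \Big| \,\le\, \int_0^{\|x\|_2} B_d\, t \dint t \,=\, \frac{B_d}{2}\,\|x\|_2^2.
\]

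Next I would integrate this inequality over $K_d$. Since $\vol_d(K_d)=1$ and the center of mass of $K_d$ is at the origin, $\int_{K_d}\langle\nabla f(0),x\rangle\dint x = \langle\nabla f(0),\int_{K_d}x\dint x\rangle = 0$, so the linear term drops out:
\[
\left|\int_{K_d} f(x)\dint x - f(0)\right|
\,=\, \left|\int_{K_d}\big(f(x) - f(0) - \langle\nabla f(0),x\rangle\big)\dint x\right|
\,\le\, \frac{B_d}{2}\int_{K_d}\|x\|_2^2\dint x \,=\, \frac{B_d}{2}\,I_2(K_d).
\]
This bound is uniform over the function class in question, so taking the supremum over all such $f$ and then letting $d\to\infty$, the assumption $\lim_{d\to\infty} B_d\cdot I_2(K_d)=0$ gives the claim.

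I do not expect a serious obstacle here; the argument is elementary once one sees that the role of the centered hypothesis is precisely to annihilate the unavoidable first-order term $\langle\nabla f(0),x\rangle$, and the role of star-shapedness is to make the radial segments available for the fundamental theorem of calculus. The only minor point to handle carefully is the case $x=0$, where the estimate holds trivially, and measurability/integrability of $x\mapsto f(x)$, which is immediate since $f$ is continuous on the compact-up-to-measure set $K_d$ (or one simply notes $f$ is bounded on $K_d$ because $\|f\|_\infty$ is finite on the relevant domain, and $I_2(K_d)<\infty$). One should also remark that $(D^\theta f)(0)$ is well-defined and equals $\langle\nabla f(0),\theta\rangle$ because $f\in\mathscr C^1$; this is the reason the proposition requires $\mathscr C^1$ rather than merely a Lipschitz condition on directional derivatives along rays. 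The analogous but simpler Proposition~\ref{prop:upper1} follows the same scheme with one application of the fundamental theorem of calculus and the trivial bound $|f(x)-f(0)|\le A_d\|x\|_2$, integrated to give $A_d\cdot I_1(K_d)$.
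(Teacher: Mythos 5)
Your proof is correct and follows essentially the same route as the paper's: both annihilate the linear term $\langle\nabla f(0),x\rangle$ using the centered hypothesis and bound the Taylor remainder $f(x)-f(0)-\langle\nabla f(0),x\rangle$ by a constant times $B_d\|x\|_2^2$. The only difference is that you integrate $D^\theta f$ along radial segments via the fundamental theorem of calculus (giving the slightly sharper bound $\tfrac12 B_d\, I_2(K_d)$), whereas the paper invokes the mean value theorem to write $f(x)-f(0)=\langle\nabla f(y_x),x\rangle$ and then bounds $\langle\nabla f(y_x)-\nabla f(0),x\rangle$ (giving $B_d\, I_2(K_d)$); this is a cosmetic variation.
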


\bigskip

\begin{proof}[Proof of Proposition~\ref{prop:upper1}]
Using the estimate
\[
|f(x)-f(0)| \,\le\, A_d\,\|x\|_2,
\]
valid for all $x\in K_d$ and 
$f\in \mathscr C(K_d)$ with $\Lip(f)\le A_{d}$, 
together with the triangle inequality, we obtain 
\begin{equation}\label{eq:proof1}
\left| \int_{K_d} f(x)\dint x \,-\, f(0) \right|
\,\le\, A_d\,\int_{K_d}\|x\|_2\dint x
\,=\, A_d \cdot I_1(K_d).
\end{equation}
\end{proof}

\begin{proof}[Proof of Proposition~\ref{prop:upper2}]
Similar to \cite[Proposition 4.7]{HNUW2}, we use that the mean value 
theorem implies
\[
f(x)-f(0) \,=\, \langle \nabla f(y_x), x\rangle
\]
for some $y_x$ on the line segment between $0$ and $x$, and $f\in \mathscr C^1(K_d)$. 
Moreover, since the center of mass of $K_d$ is at the origin, we have 
$\int_{K_d} \langle \nabla f(0), x \rangle\, \dint x = 0$. Hence,
\[\begin{split}
\int_{K_d} f(x) \dint x - f(0)
\;&=\; \int_{K_d} \big(f(x) - f(0) - \langle \nabla f(0), x\rangle \big) \, \dint x \\
&=\; \int_{K_d} \big\langle \nabla f(y_x) - \nabla f(0), x \big\rangle \, \dint x. 
\end{split}\]
With $\theta_x:=x/\|x\|_2$, we obtain 
$\langle \nabla f(y), x \rangle = D^{\theta_x}f(y)\, \|x\|_2$ 
and 
\[\begin{split}
\abs{\big\langle \nabla f(y_x) - \nabla f(0), x\big\rangle}
\;&=\; \abs{D^{\theta_x}f(y_x)-D^{\theta_x}f(0)}\, \|x\|_2 \\
&\le\; \Lip\big(D^{\theta_x}f\big)\, \|y_x\|_2\, \|x\|_2 \\
&\le\; \Lip\big(D^{\theta_x}f\big)\, \|x\|_2^2\,.
\end{split}\]
If now $\Lip(D^\theta f)\le B_{d}$ for all $\theta\in\SSS^{d-1}$, we get
\begin{equation} \label{eq:proof2}
\abs{\int_{K_d} f(x) \dint x - f(0)}
\,\le\, B_d\, \int_{K_d} \|x\|_2^2\, \dint x
\,=\, B_d \cdot I_2(K_d).
\end{equation}
\end{proof}
\medskip

Obviously, Proposition~\ref{prop:upper1} and Proposition~\ref{prop:upper2} 
hold also if the corresponding classes of functions are replaced by 
$\mathscr C^1_d(A_d,B_d,K_d)$. Moreover, it is clear from the definition that 
$I_2(K_d) = d\,L_{K_d}^2$ and 
$I_1(K_d) \le \sqrt{I_2(K_d)} = \sqrt{d}\,L_{K_d}$ 
for isotropic $K_d$.
Therefore, combining \eqref{eq:proof1} and \eqref{eq:proof2}
proves the corresponding result in Theorem~\ref{thm:main-upper}.

Finally, note that the above propositions imply that
the error of the trivial algorithm $A_{1,d}(f):=f(0)$ goes to zero 
(as $d\to\infty$) for every $f\in \mathscr C^1_d(A_d,B_d,K_d)$, 
where $A_d$, $B_d$ and $K_d$ satisfy the above conditions. 

Obviously, $A_{1,d}$ uses only one function evaluation.
Therefore, 
\[
n\big(\eps, \mathscr C^1_d(A_d,B_d,K_d)\big) \,=\, 1
\]
for all $\eps\in(0,1)$ given that $d\ge d(\eps)$ is large enough (see Section~\ref{sec:IBC}). Here we also used that $n\big(\eps, \mathscr C^1_d(A_d,B_d,K_d)\big)>0$ for $\eps\in(0,1)$, 
because the initial error equals 1. In particular, we do not have the curse of dimensionality.

Finally note that, by Proposition~\ref{prop:upper1}, Proposition \ref{prop:upper2} and the fact that the isotropic constant is uniformly bounded for all $\ell_p$-balls, 
the ``only if''-part of Corollary~\ref{cor:lp} also holds for $1\le p<2$.

\bibliographystyle{abbrv}
\bibliography{curse_generalized_domains}

\end{document}